\newcommand{\RR}{\mathbb{R}}
\newcommand{\FF}{\mathbb{F}}
\newcommand{\ZZ}{\mathbb{Z}}
\newtheorem{theorem}{Theorem}[subsection]
\newtheorem{prop}[theorem]{Proposition}
\newtheorem{cor}[theorem]{Corollary}
\newtheorem{definition}[theorem]{Definition}
\newtheorem{remark}[theorem]{Remark}
\begin{document}

\title{Classifying toric 3-fold codes of dimensions 4 and 5}

\author[T.~Braun]{Tori Braun}
\address{Tori Braun \\ Ripon College \\ Ripon, Wisconsin \href{mailto:toribraun15@gmail.com}
{{\ttfamily\upshape toribraun15@gmail.com}}}

\author[J.~Carzon]{James Carzon}
\address{James Carzon\\ University
  of Michigan--Dearborn \\ Dearborn, Michigan
  \href{mailto:jcarzon@umich.edu}
{{\ttfamily\upshape jcarzon@umich.edu}}}

\author[J.~Gorham]{Jenna Gorham}
\address{Jenna Gorham\\  University of Arizona  \\ Tucson, Arizona \\
  \href{jgorham@email.arizona.edu}%
  {{\ttfamily\upshape jgorham@email.arizona.edu}}}

\author[K.~Jabbusch]{Kelly Jabbusch}
\address{Kelly Jabbusch\\ Department of Mathematics \& Statistics\\ University
  of Michigan--Dearborn \\ Dearborn, Michigan
 \\ 
  \href{mailto:jabbusch@umich.edu}%
  {{\ttfamily\upshape jabbusch@umich.edu}}}

\maketitle

\begin{abstract}
% previous abstract:
%A toric code is an error-correcting code determined by a toric variety or its associated integral convex polytope. Previous authors have classified toric surface codes with dimension up to $k=7$. We classify all $4$-dimensional toric codes of polytopes in $\mathbb{R}^3$, and we present progress toward the same for dimension $5$ codes. In particular, we classify the $5$-dimensional codes of polytopes of width 1.

%A toric code is an error-correcting code determined by a toric variety or its associated integral convex polytope.  We investigate $4-$ and $5$-dimensional toric $3$-fold codes; that is, codes arising from polytopes in $\RR^3$ with four and five lattice points. For $4$-dimensional toric $3$-fold codes we compute the minimum distance of each code and use this to classify all such toric $3$-fold codes. We further present progress toward the same for dimension $5$ codes. In particular, we classify the $5$-dimensional toric $3$-fold codes arising from polytopes of width 1.

A toric code is an error-correcting code determined by a toric variety or its associated integral convex polytope. We investigate $4$- and $5$-dimensional toric $3$-fold codes, which are codes arising from polytopes in $\RR^3$ with four and five lattice points, respectively. By computing the minimum distances of each code, we fully classify the $4$-dimensional codes. We further present progress toward the same goal for dimension $5$ codes. In particular, we classify the $5$-dimensional toric $3$-fold codes arising from polytopes of width 1.
\end{abstract}

\tableofcontents

\section{Introduction}
\label{first section}

It is unknown in general whether an error correcting code will be ``good'' - say, in terms of the number of errors which it can dependably handle - until its parameters are computed, leading to an interest in classifying all the possible distinct codes. Some impressive results have come from particular toric codes, which Hansen introduced as the natural extension of Reed-Solomon codes in 1998 \cite{Hansen98}. By the correspondence between toric varieties and integral convex polytopes, one obtains a toric code $C_P$ determined by a polytope $P$. It is our imperative to be able to determine in general whether a code $C_P$ is the same as a code $C_{P'}$, where $P\neq P'$, in an essential way.

Many previous authors have written toward answering this question in particular cases. Little and Schwarz classified toric surface codes - toric codes corresponding to polytopes in $\RR^2$ - of dimensions up to 5 \cite{LSchwarz}. Luo, Yau, Zhang, and Zuo furthered this to toric surface codes up to 6 dimensions \cite{LYZZShort}, and later these same authors further classified most of the 7 dimension codes with Hussain \cite{HLYZZ}. The classification of 7 dimension codes was independently carried out by Cairncross, Ford, Garcia, and Jabbusch \cite{REU2019}. Departing slightly from the trend of these investigations, we present original theorems disambiguating the toric codes which are furnished by polytopes in $\RR^3$ with up to 4 lattice points. We refer to these as toric 3-fold codes of dimension 4 (as compared to \textit{toric surface codes}) to avoid confusion. We then make progress toward a classification of toric 3-fold codes of dimension 5 by considering those polytopes structurally very similar to those of dimension 4.

At a high level, a toric code $C$ is constructed by selecting a finite field $\FF_q$ and a convex lattice polytope $P$ in $[0,q-2]^m\subset\RR^m$. Codewords are obtained from evaluating linear combinations of monomials arising from the lattice points of $P$ at points in $(\FF_q^*)^m$, and so one can represent the code by a generator matrix whose row space is the dictionary of codewords. A code can be further described by parameters such as \textit{length}, the number of points in $(\FF_q^*)^m$, denoted $n$; \textit{dimension}, the number of lattice points $k=\lvert P\cap\ZZ^m\rvert$; and \textit{minimum distance}, the minimum Hamming distance between codewords, denoted $d$. Note that we refer to $k$ as the dimension of $C$ and denote it $dim(C)$, which is not to be confused with the dimension $m$ of the space in which $P$ is embedded. Two codes will be considered the same if they are \textit{monomially equivalent}, a term defined shortly.

Note that all toric codes obtained from polytopes in $\RR^3$ containing fewer than 4 lattice points are trivially classified because they can also be embedded in $\RR^2$ and have been previously studied. Likewise, for the purposes of this paper we will also overlook those codes furnished by polytopes of 4 or 5 lattice points in $\RR^3$ which can be embedded in $\RR^2$ since they are also well known.

More explicitly, the construction of a toric code is as follows. Selecting a finite field of order $q$ a prime power and a lattice convex polytope $P$ in $[0,q-2]^m$ with $k$ lattice points, one obtains a toric code $C_P$ by the vector space $Im(\epsilon)$, where the map $\epsilon: \mathcal{L}(P)\rightarrow (\FF_q)^n$ evaluates polynomials $f$ in 
\begin{equation*}
    \mathcal{L}(P) = \text{Span}_{\FF_q}\{\textbf{x}^p:p\in P\cap \ZZ^m\}
\end{equation*}
at each point in $(\FF_q)^m$. For example, let $q=11$ and let $P$ be the tetrahedron with vertices $(0,0,0)$, $(1,0,0)$, $(0,0,1)$, and $(1,1,1)$. (The significance of this choice is explained shortly.) Then we construct the collection of polynomials.
\begin{equation*}
    \mathcal{L}(P) = \{ax^0y^0z^0+bx^1y^0z^0+cx^0y^0z^1+dx^1y^1z^1:a,b,c,d\in\FF_{11}\}.
\end{equation*}
We may then select a polynomial $f$ in $\mathcal{L}(P)$ and evaluate it at each of the $n$ triples $(x,y,z)\in\left(\FF_{11}^*\right)^3$. Our resulting codeword is the $n$-tuple $(f(1,1,1),f(2,1,1),\ldots,f(11,11,11))$. This is more general than Reed-Solomon because the polynomials $f\in\mathcal{L}(P)$ may be in more than one indeterminate. The generator matrix $G$ of $C_P$ is the matrix $(\textbf{x}^p)$ with $\textbf{x}\in(\FF_q^*)^m$ and $p\in P\cap\ZZ^m$, where each column corresponds to a different point $\textbf{x}$ in the field. Then 
\begin{equation*}
    C_P = \{uG:u\in(\FF_q)^k\}.
\end{equation*}

Let $C_{P_1}$ and $C_{P_2}$ be toric codes with generator matrices $G_1$ and $G_2$ respectively.
\begin{definition} \label{def:moneq}
We say $C_{P_1}$ and $C_{P_2}$ are \textbf{monomially equivalent} if there exists an invertible $n \times n$ diagonal matrix $\Delta$ and an $n \times n$ permutation matrix $\Pi$ such that
\begin{equation} \label{eq:moneq}
    G_2 = G_1\Delta\Pi.
\end{equation}
\end{definition}
As a remark, if there exists an affine unimodular transformation $t:\ZZ^n\rightarrow\ZZ^n$ such that $t(P_1)=P_2$, then we say that $P_1$ and $P_2$ are \textit{lattice equivalent}, which implies that $C_{P_1}$ and $C_{P_2}$ are monomially equivalent \cite[Theorem 4]{LSchwarz}. For the toric 3-fold codes of dimension 4, we may make ready use of a classic theorem by White.
\begin{theorem}[\cite{white_1964}]
    Every 3-polytope with 4 integral lattice points of volume $t$ is lattice equivalent to an empty tetrahedron $$T(s,t) = \text{Conv}\{ (0,0,0), (1,0,0), (0,0,1), (s,t,1) \},$$ for some $s \in \ZZ$ where $\gcd(s,t)=1$. Moreover, $T(s, t)$ is lattice equivalent to $T(s', t)$ if and only if $s'=\pm s^{(\pm 1)}$ (mod $t$).
    
    \begin{figure}[t!]
        \centering
        \includegraphics[scale=0.4]{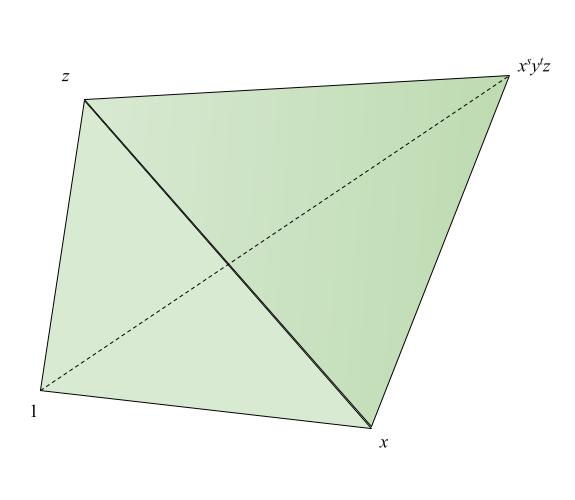}
        \caption{Monomials from the vertices of the empty tetrahedron $T(s,t)$.}
        \label{fig:my_label}
    \end{figure}
\end{theorem}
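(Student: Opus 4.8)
The plan is to classify empty lattice $3$-simplices — those having only their four vertices as lattice points — by reducing to a normal form under unimodular (affine) transformations and then classifying the normal forms together with their coincidences. After translating a vertex to the origin, I would record the three edge vectors as the rows of an integer matrix of determinant $\pm t$, where $t$ is the normalized volume, and put it into lower-triangular Hermite normal form with diagonal $(a,b,g)$, $abg=t$. Since every edge of the simplex must be primitive (an interior lattice point on an edge would lie in the simplex), $a=1$; since the triangular face spanned by the first two edge vectors must be an empty, hence unimodular, lattice triangle, $b=1$. A shear fixing those two vertices, followed by vertical shears, then leaves the simplex lattice equivalent to $\mathrm{Conv}\{(0,0,0),(1,0,0),(0,1,0),(e,f,t)\}$ with $0\le e,f<t$.

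Next I would convert emptiness into arithmetic. Identifying $\ZZ^3$ modulo the lattice $\Lambda$ spanned by the three edge vectors with $\ZZ/t$, each non-vertex lattice point of the simplex corresponds to a residue $j\in\{1,\dots,t-1\}$ whose coordinates with respect to the edge frame are $\bigl(\langle -je\rangle/t,\ \langle -jf\rangle/t,\ j/t\bigr)$, where $\langle x\rangle$ denotes the residue of $x$ in $\{0,\dots,t-1\}$, and such a point lies in the simplex exactly when these three coordinates sum to at most $1$. Hence the simplex is empty if and only if $\langle -je\rangle+\langle -jf\rangle+j>t$ for all $j\in\{1,\dots,t-1\}$. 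The heart of the argument — and the step I expect to be the real obstacle — is to show that these inequalities, exploited together with the residual symmetries (relabelling the four vertices, the induced $x\leftrightarrow y$ swap, sign changes, and the exchange of the two ``heights''), force the simplex to have lattice width $1$; equivalently, that one may renormalize $(e,f)$ so that the simplex becomes the empty tetrahedron $T(s,t)$ with $\gcd(s,t)=1$, the coprimality being precisely the condition that makes the width-$1$ slice at level $1$, namely the edge $[(0,0,1),(s,t,1)]$, a primitive segment (and hence the simplex empty). Once width $1$ is in hand the rest is routine: a width-$1$ tetrahedron is either the convex hull of a primitive segment at level $0$ and a primitive segment at level $1$, which after normalization is exactly $T(s,t)$, or the convex hull of a triangle and a point, where emptiness forces the triangle to be unimodular, the normalized volume to be $1$, and the tetrahedron to be lattice equivalent to the standard simplex $T(s,1)$.

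For the ``moreover'' clause I would track how the symmetry group acts on the residue $s\in(\ZZ/t)^\times$. The transformation $(x,y,z)\mapsto(1-x,y,z)$ followed by a shear swaps the two vertices of the level-$0$ edge and sends $T(s,t)$ to $T(-s,t)$; the transformation $(x,y,z)\mapsto(x,y,1-z)$ followed by the $GL_2(\ZZ)$ map carrying $(s,t)$ to $(1,0)$ (available since $\gcd(s,t)=1$) and a reflection exchanges the level-$0$ and level-$1$ edges and sends $T(s,t)$ to $T(s^{-1},t)$; composing these realizes every element of $\{\pm s,\pm s^{-1}\}\pmod t$, so all of these tetrahedra are lattice equivalent. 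For the converse, any unimodular equivalence $T(s,t)\to T(s',t)$ carries vertices to vertices, so it is determined by one of the finitely many relabellings of the four vertices; running through them and keeping only those realized by an integer matrix of determinant $\pm1$ shows that the resulting $s'$ always lies in $\{\pm s,\pm s^{-1}\}\pmod t$, which completes the classification. The only genuinely delicate point is the width-$1$ reduction in the previous paragraph; everything else is bookkeeping with Hermite normal forms and explicit small integer matrices.
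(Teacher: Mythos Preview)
The paper does not prove this statement; it is quoted as a classical result of White and cited as \cite{white_1964}. The only part the paper touches directly is the sufficiency direction of the ``moreover'' clause: after the proof of Theorem~\ref{thm:class2} it writes down, case by case, explicit affine unimodular maps sending $T(s_2,t)$ to $T(s_1,t)$ when $s_1\equiv s_2$, $s_1\equiv s_2^{-1}$, $s_1\equiv -s_2$, or $s_1\equiv -s_2^{-1}\pmod t$. So there is essentially nothing in the paper to compare your full proof sketch against.

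Your outline is a reasonable plan for a proof and you have correctly isolated the genuine content: the width-$1$ reduction. The Hermite-normal-form bookkeeping, the primitivity of edges forcing $a=1$, and the unimodularity of an empty facet forcing $b=1$ are all fine. Your arithmetic reformulation of emptiness in terms of residues is the standard one. For the sufficiency half of the ``moreover'' clause your abstract description (swap the level-$0$ vertices to get $s\mapsto -s$; swap the two levels and renormalize to get $s\mapsto s^{-1}$) matches exactly what the paper's explicit matrices do, just phrased geometrically rather than written out entrywise. For necessity, your ``run through the $24$ vertex relabellings'' argument is valid in principle, though in practice one usually shortens it by noting that the lattice-width direction is unique up to sign, so any equivalence must respect the two-level structure and hence act on $(s\bmod t)$ through the dihedral group you have already identified.

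The one place where your sketch is genuinely incomplete is exactly where you flag it: you do not indicate \emph{how} the inequalities $\langle -je\rangle+\langle -jf\rangle+j>t$ force width $1$. This is the whole theorem, and the symmetries you list do not by themselves produce it; White's original argument (and the later proofs by Reznick, Scarf, and others) require an additional combinatorial or number-theoretic idea beyond relabelling. So as written your proposal is an accurate map of the terrain but not yet a proof.
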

Thus if two 3-fold toric codes of dimension 4 are not monomially equivalent, then we know that they can be furnished by two empty tetrahedra with different values of the parameters $s$ or $t$.

\subsection{Statements of our theorems}

Therefore an outline of our approach to classifying four dimensional codes is as follows. Firstly, we provide some preliminary machinery in Section 3, most notably a formula for the minimum distances of four dimensional codes.

\begin{restatable}{thm}{mindist}
\label{thm:mindist4}
For any four dimensional toric code on an empty tetrahedron $T(s,t)$, the minimum distance is
\begin{equation*}
    d(C_{T(s,t)}) = 
    \begin{cases}
        (q-1)^3-(q-1)^2 &\mbox{if }\gcd(t,q-1)=1\\
        (q-1)^3-(q-1)(q-3)-q\gcd(t,q-1) &\mbox{else.}
    \end{cases}
\end{equation*}
\end{restatable}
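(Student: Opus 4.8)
The plan is to use the standard identity
\[
d(C_{T(s,t)}) \;=\; n \;-\; \max_{0 \neq f \in \mathcal L(T(s,t))} \lvert Z(f)\rvert,
\]
where $n = (q-1)^3$ and $Z(f) = \{\mathbf x \in (\FF_q^*)^3 : f(\mathbf x) = 0\}$. Since the lattice points of $T(s,t)$ are exactly its four vertices, every $f \in \mathcal L(T(s,t))$ has the shape $f = a + bx + cz + d\,x^s y^t z$ with $a,b,c,d \in \FF_q$; these four monomials are linearly independent as functions on $(\FF_q^*)^3$ (the exponent vectors are distinct mod $q-1$), so $f$ is the zero polynomial iff $(a,b,c,d)=0$. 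Rescaling $f$ does not change $Z(f)$, so I would split into the cases $d=0$ and $d=1$. The case $d=0$ is quick: $f = a+bx+cz$ depends only on $x,z$, and since fixing one of the two variables pins down at most one value of the other, $\lvert Z(f)\rvert \le (q-1)^2$, with equality attained, e.g., by $f = x-1$.

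For $d=1$ I would write $f = (a+bx) + (c+x^s y^t)z$, which is affine in $z$ once $x$ and $y$ are fixed. Summing over $x\in\FF_q^*$, the count $N(x) := \#\{(y,z): f(x,y,z)=0\}$ equals $(q-1)-r(x)$ when $a+bx\neq 0$, and $(q-1)\,r(x)$ when $a+bx=0$, where $r(x) := \#\{y\in\FF_q^* : x^s y^t = -c\}$. If $c=0$ then $r\equiv 0$, and one gets $\lvert Z(f)\rvert \le (q-1)^2$ at once. If $c\neq 0$ then $r(x)\in\{0,g\}$ with $g := \gcd(t,q-1)$, and the heart of the argument is the counting lemma
\[
\#\{x\in\FF_q^* : r(x) = g\} \;=\; \frac{q-1}{g}, \qquad\text{equivalently}\qquad \sum_{x\in\FF_q^*} r(x) = q-1.
\]

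This lemma is where the hypothesis $\gcd(s,t)=1$ from White's theorem enters, and I expect it to be the main obstacle; everything after it is bookkeeping. One has $r(x)=g$ iff $x^s \in (-c)H$, where $H$ is the subgroup of $t$-th powers in $\FF_q^*$; letting $H_s$ be the image of the $s$-th power map, $\#\{x : x^s\in(-c)H\} = \gcd(s,q-1)\cdot\lvert H_s\cap (-c)H\rvert$. The coset $(-c)H$ meets $H_s$ because $H_sH = \FF_q^*$ — indeed the index of $H_sH$ divides $\gcd(\gcd(s,q-1),\gcd(t,q-1))$, which divides $\gcd(s,t)=1$ — and then $\lvert H_s\cap(-c)H\rvert = \lvert H_s\cap H\rvert = (q-1)/\mathrm{lcm}(\gcd(s,q-1),g)$; the arithmetic collapses to $(q-1)/g$. (The same computation with $s,t$ swapped gives $\sum_x r(x) = q-1$ directly.)

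With $\sum_x r(x) = q-1$ in hand, a short case analysis on whether $a=0$ or $b=0$ reduces $\lvert Z(f)\rvert$ to a closed form: if $b=0$ or $a=0$ one gets at most $(q-1)^2$, while if $b\neq 0\neq a$ there is a unique $x_0=-a/b\in\FF_q^*$ with $a+bx_0=0$, and a computation gives $\lvert Z(f)\rvert = q\,r(x_0) + (q-1)(q-3)$. Since the good set is nonempty, $x_0$ can be chosen in it, so the maximum over this case is $qg + (q-1)(q-3)$, realized by $f = (x-x_0) + cz + x^s y^t z$ with $x_0$ good. Comparing with the universal bound $(q-1)^2$ from all other cases: when $g=1$ we get $q^2-3q+3 < (q-1)^2$, so $\max\lvert Z(f)\rvert = (q-1)^2$ and $d = (q-1)^3-(q-1)^2$ (attained by $f = 1 + x^s y^t z$); when $g\ge 2$ we get $qg+(q-1)(q-3) = (q-1)^2 + q(g-2) + 2 > (q-1)^2$, so $\max\lvert Z(f)\rvert = qg+(q-1)(q-3)$ and $d = (q-1)^3 - (q-1)(q-3) - q\gcd(t,q-1)$, as claimed.
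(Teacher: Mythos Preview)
Your proof is correct and follows essentially the same route as the paper: both reduce to counting zeros of $a+bx+cz+dx^sy^tz$ by fixing $x$ and exploiting the affine dependence on $z$, both pivot on the identity $\sum_{x}r(x)=q-1$, and both finish by comparing $qg+(q-1)(q-3)$ against $(q-1)^2$. The only substantive difference is that you give a complete group-theoretic justification of $\sum_x r(x)=q-1$ that makes the role of $\gcd(s,t)=1$ explicit, whereas the paper asserts this identity with the one-line remark that ``every value of $y$'' yields a solution for some $x$---a claim that is not literally true when $\gcd(s,q-1)>1$, so your treatment of this step is in fact more careful.
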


Secondly, we inquire about the equivalence classes which we obtain when we fix $s$. To answer for this case, we prove the first classification theorem: 
\begin{restatable}{thm}{classone}
\label{thm:class1}
Let $C_1$ and $C_2$ be four dimensional toric codes on empty tetrahedra $T(s,t_1)$ and $T(s,t_2)$, respectively, over the field $\FF_q$. Then $C_1$ and $C_2$ are monomially equivalent if and only if $\gcd(t_1,q-1)=\gcd(t_2,q-1)$.
\end{restatable}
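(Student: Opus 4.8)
The plan is to prove both directions using Theorem~\ref{thm:mindist4} together with a dimension count. First observe that since $s$ is held fixed, the two codes $C_1$ and $C_2$ have the same dimension $k=4$ and the same length $n=(q-1)^3$; the only parameter that can distinguish them is the minimum distance, so a natural strategy is to show that monomial equivalence is governed entirely by $d$, and that $d$ in turn depends only on $\gcd(t,q-1)$.

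For the forward direction, suppose $C_1$ and $C_2$ are monomially equivalent. Since a diagonal invertible matrix and a permutation matrix both preserve Hamming weight, monomially equivalent codes have equal minimum distance, so $d(C_{T(s,t_1)}) = d(C_{T(s,t_2)})$. Now I would invoke the formula in Theorem~\ref{thm:mindist4}: the right-hand side is a strictly monotone function of $\gcd(t,q-1)$ on the relevant range — when the gcd is $1$ we land in the first case, and when it is some $g>1$ we get $(q-1)^3-(q-1)(q-3)-qg$, which is strictly decreasing in $g$. One must also check the two cases do not accidentally produce the same value (i.e. that $(q-1)^3-(q-1)^2$ is not equal to $(q-1)^3-(q-1)(q-3)-qg$ for any admissible $g\ge 2$), which is a short inequality check. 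Hence equality of minimum distances forces $\gcd(t_1,q-1)=\gcd(t_2,q-1)$.

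For the converse, suppose $\gcd(t_1,q-1)=\gcd(t_2,q-1)=:g$. The cleanest route is to exhibit an explicit monomial equivalence rather than argue abstractly. The generator matrix of $C_{T(s,t)}$ has rows indexed by the four monomials $1,\ x,\ z,\ x^s y^t z$ (see Figure~\ref{fig:my_label}), evaluated over $(\FF_q^*)^3$. When we pass from $t_1$ to $t_2$, only the exponent of $y$ in the last monomial changes. Because $y$ ranges over the cyclic group $\FF_q^*$ of order $q-1$, the map $y\mapsto y^{t}$ has image exactly the subgroup of index $g=\gcd(t,q-1)$, and any two exponents with the same gcd give the same image subgroup and differ by an automorphism of that subgroup; pulling this back through a suitable relabeling $y\mapsto y^a$ of the coordinate torus (with $a$ chosen so that $a t_1 \equiv t_2 \pmod{q-1}$ on the relevant subgroup, possible since $t_1/g$ and $t_2/g$ are both units mod $(q-1)/g$) induces a permutation of the evaluation points $(\FF_q^*)^3$ carrying $G_1$ to $G_2$. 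The diagonal matrix $\Delta$ can be taken to be the identity here. I would package this as: the coordinate change $(x,y,z)\mapsto(x,y^a,z)$ is a bijection of $(\FF_q^*)^3$ permuting columns, and it transforms the monomial $x^s y^{t_1} z$ into $x^s y^{a t_1} z$; choosing $a$ so that $a t_1\equiv t_2$ modulo $q-1$ — wait, that requires $t_1$ invertible, which fails when $g>1$, so more care is needed: one instead argues that evaluation only sees $t\bmod(q-1)$ composed with the structure of the subgroup, and two surjections onto the same index-$g$ subgroup are related by a column permutation. This reduces to the standard fact that $\FF_q^*$ is cyclic.

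The main obstacle I anticipate is the converse direction when $g>1$: one cannot simply rescale the $y$-coordinate by a unit, because $t_1$ and $t_2$ need not be units modulo $q-1$. The right framework is to think of the last coordinate of each codeword as a function that factors through the quotient homomorphism $\FF_q^* \to \FF_q^*/(\FF_q^*)^{?}$ — more precisely through $y\mapsto y^{t}$, whose image depends only on $g$ — and then to show that relabeling the domain torus by an appropriate power permutes evaluation points compatibly; establishing that this relabeling is a genuine bijection on all of $(\FF_q^*)^3$ (not just on a subgroup) and that it realizes Equation~\eqref{eq:moneq} with an honest permutation matrix $\Pi$ is the technical heart of the argument. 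An alternative, possibly cleaner, fallback is purely parameter-counting: show directly that the $[n,4,d]_q$ code with these specific $d$-values is unique up to monomial equivalence, but that seems harder to make rigorous than the explicit torus-automorphism construction.
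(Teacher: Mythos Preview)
Your forward direction is correct and matches the paper: contrapose and read off from Theorem~\ref{thm:mindist4} that distinct values of $\gcd(t,q-1)$ give distinct minimum distances.

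For the converse you correctly identify the obstruction to the substitution $y\mapsto y^a$ but then leave the argument unfinished. In fact that route can be completed: writing $t_i=gu_i$ and $q-1=gm$ with $\gcd(u_i,m)=1$, first solve $a\equiv u_2u_1^{-1}\pmod m$, then use the Chinese remainder theorem on the primes dividing $g$ but not $m$ to adjust $a$ to a unit modulo $q-1$; the resulting $y\mapsto y^a$ is a bijection of $\FF_q^*$ satisfying $(y^a)^{t_1}=y^{t_2}$, and the induced column permutation realizes $G_2=G_1\Pi$. The paper, however, sidesteps this number-theoretic lemma with a cleaner device. A column of $G_i$ depends only on the triple $(x,z,y^{t_i})$, so one partitions the column indices into cells $I_i(x_0,z_0,A)=\{(x_0,y,z_0):y^{t_i}=A\}$. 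Since $\FF_q^*$ is cyclic, the equation $y^{t_i}=A$ has either $0$ or $g$ solutions, and which alternative holds depends only on whether $A$ lies in the unique index-$g$ subgroup --- a criterion independent of $i$. Hence $|I_1(x_0,z_0,A)|=|I_2(x_0,z_0,A)|$ for every $(x_0,z_0,A)$, and any permutation $\Pi$ matching these blocks gives \eqref{eq:moneq} with $\Delta=I$. This is exactly what your phrase about ``two surjections onto the same index-$g$ subgroup'' is pointing toward; the partition-by-value argument makes it precise without ever needing a single global torus automorphism.
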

Lastly, if we conversely fix the parameter $t$, then we determine the equivalence classes in this case by proving the second classification theorem:
\begin{restatable}{thm}{classtwo}
\label{thm:class2}
Let $C_1$ and $C_2$ be four dimensional toric codes on empty tetrahedra $T(s_1,t)$ and $T(s_2,t)$, respectively, over the field $\FF_q$. If either of the following conditions hold true, then $C_1$ and $C_2$ are monomially equivalent:
\begin{itemize}
    \item $s_1 \equiv s_2 \bmod \gcd(t,q-1)$
    \item $s_1 \equiv \pm s_2^{\pm 1} \bmod t$
\end{itemize}
Furthermore, if neither of these hold, then the codes are not monomially equivalent.
\end{restatable}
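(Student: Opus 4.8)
The plan is to prove the two implications separately; the ``if'' direction splits along the two stated conditions, while the ``only if'' direction carries the real work. Suppose first that $s_1\equiv s_2\bmod\gcd(t,q-1)$, and choose $a\in\ZZ$ with $s_1+at\equiv s_2\pmod{q-1}$. The map $\sigma\colon(x,y,z)\mapsto(x,x^{a}y,z)$ is a bijection of $(\FF_q^*)^3$ (inverse $(x,y,z)\mapsto(x,x^{-a}y,z)$), so replacing each codeword $f$ of $C_1$ by $f\circ\sigma$ merely permutes evaluation points and hence yields a monomially equivalent code; but under $\sigma$ the monomial $x^{s_1}y^tz$ pulls back to $x^{s_1+at}y^tz$, which, since $\zeta^{q-1}=1$ for every $\zeta\in\FF_q^*$, is the \emph{same function} on the torus as $x^{s_2}y^tz$, while $1,x,z$ are fixed. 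So $C_1\circ\sigma$ has generator rows $1,x,z,x^{s_2}y^tz$, i.e.\ $C_1\cong C_2$. If instead $s_1\equiv\pm s_2^{\pm1}\bmod t$, then by White's theorem $T(s_1,t)$ and $T(s_2,t)$ are lattice equivalent, and lattice equivalence implies monomial equivalence by the remark following Definition~\ref{def:moneq}, so again $C_1\cong C_2$.

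For the converse, assume $C_1$ and $C_2$ are monomially equivalent. On the level of evaluation functions this says $\mathcal{L}(T(s_2,t))=\delta\cdot\bigl(\mathcal{L}(T(s_1,t))\circ\pi\bigr)$ for some permutation $\pi$ of $(\FF_q^*)^3$ and some nowhere-vanishing function $\delta$. Since $1\in\mathcal{L}(T(s_1,t))$, the function $\delta$ is a nowhere-vanishing element of $\mathcal{L}(T(s_2,t))$; a short computation (using $\gcd(s_2,t)=1$) identifies these, and translating $T(s_2,t)$ by the exponent of the monomial involved lets us take $\delta$ constant. One then argues that the remaining $\pi$ may be taken to be a group automorphism of $(\FF_q^*)^3\cong(\ZZ/(q-1))^3$ followed by a translation — equivalently, it is induced by an affine map $p\mapsto Ap+v$ of $\ZZ^3$ with $A\in GL_3(\ZZ/(q-1))$ carrying $\{(0,0,0),(1,0,0),(0,0,1),(s_1,t,1)\}$ onto $\{(0,0,0),(1,0,0),(0,0,1),(s_2,t,1)\}$ modulo $q-1$.

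The proof then reduces to a finite case analysis: such a map permutes the four vertices and is determined by its values on the affinely independent quadruple $(0,0,0),(1,0,0),(0,0,1),(s_1,t,1)$ (whose difference vectors have determinant $\pm t$), so each of the $24$ vertex-permutations yields an explicit matrix $A$; imposing that $\det A$ be a unit modulo $q-1$ and that the fourth vertex land correctly produces a congruence, and one checks that every surviving congruence reduces to $s_1\equiv s_2\bmod\gcd(t,q-1)$ or to $s_1\equiv\pm s_2^{\pm1}\bmod t$. I expect this last step to be the main obstacle: one must keep careful track of which relations carry modulus $t$ (the genuine unimodular maps of White's theorem) versus modulus $\gcd(t,q-1)$ (shears that only close up after reduction mod $q-1$), and one must verify that the unit-determinant constraint modulo $q-1$ really does eliminate every family of candidate maps not already accounted for by the two conditions. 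A more computational alternative for the necessity direction is to bypass the classification of equivalences entirely and instead compute the full weight enumerator of $C_{T(s,t)}$, extending the argument behind Theorem~\ref{thm:mindist4}, and show that it already determines $s$ up to the two congruences — conceptually simpler, but at the cost of a longer split into weight cases.
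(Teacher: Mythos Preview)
Your sufficiency argument matches the paper's: the shear $(x,y,z)\mapsto(x,x^ay,z)$ is exactly the permutation the paper builds coordinate-by-coordinate for the first bullet, and both invoke White's theorem for the second.

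The necessity direction is where you diverge, and there is a genuine gap. You assert that after normalising $\delta$ to a constant, ``the remaining $\pi$ may be taken to be a group automorphism of $(\FF_q^*)^3$ followed by a translation,'' and then propose to run through the $24$ vertex-permutations. But this reduction is precisely the hard part, and you give no argument for it. An arbitrary permutation $\pi$ of $(\FF_q^*)^3$ satisfying $\mathcal{L}(T(s_1,t))\circ\pi=\mathcal{L}(T(s_2,t))$ is not \emph{a priori} of toric type; you would need to show, for instance, that $x\circ\pi$, $z\circ\pi$, and $(x^{s_1}y^tz)\circ\pi$ are each forced to be scalar multiples of single monomials (this is true here, via a nowhere-vanishing argument, but it uses the specific shape of $T(s,t)$ and you do not carry it out). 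You flag the $24$-case check as ``the main obstacle,'' but that bookkeeping is routine once the affine-toric form of $\pi$ is established; the obstacle is the step before.

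The paper avoids this entirely. It argues that if the tetrahedra are not lattice equivalent but the codes are monomially equivalent, then one may take $\Delta=I$ in $G_2=G_1\Delta\Pi$ (with an appeal to \cite[Theorem~4]{LSchwarz} for the claim that a nontrivial $\Delta$ corresponds to a relabelling of lattice points). Granting this, the columns $(1,x,z,x^{s_i}y^tz)^T$ of the two generator matrices must literally be permutations of one another, so $\Pi$ fixes the $x$- and $z$-coordinates and acts only on $y$; the fourth entry then forces $y_1^t=\alpha^{s_2-s_1}y_2^t$ for the primitive element $\alpha$, and since $\alpha^{s_2-s_1}$ is not a $t$-th power when $s_2-s_1\not\equiv0\bmod\gcd(t,q-1)$, one gets an immediate contradiction. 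This is a few lines rather than a $24$-case split, and it never needs to classify all equivalences --- it only needs to rule out one. Your weight-enumerator alternative would also work in principle, but as you note it trades the structural step for a longer case analysis, and the paper shows neither detour is needed.
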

The proofs of these theorems are undertaken in Section 4, and we thereby obtain a full classification of these toric codes.

Section 5 consists of describing the codes of dimension 5 and proving several results. In particular, we identify what monomial equivalence classes exist for codes arising from polytopes with 5 lattice points, all of which can be found in the union of two consecutive hyperplanes. (In other words, these are the polytopes of width 1 per the terminology used by Blanco and Santos \cite{BS4and5}.) However, we stop short of a full classification of these toric codes by not addressing those codes of width 2.

Throughout, much of our geometric notation and terminology will respect that of White and of Blanco and Santos. For example, the toric code furnished by the empty tetrahedron $T(s,t)$ will be written as $C_{T(s,t)}$, and in Section 5 we will use the invariant of a polytope with 5 lattice points which they call the \textit{signature} - an unordered integer pair - to refer to it and its furnished code. For example, if $P$ has signature $(i,j)$ and parameter values $s$ and $t$, then from it we obtain the code $C_{P^{(i,j)}(s,t)}$. We will then refer to the signature of the polytope furnishing a code as the signature of the code as well.

\section{Minimum distances for dimension 4}

A premium tool for evidencing the monomial inequivalence of two toric codes is the minimum distance invariant. If two codes are monomially equivalent, then they must have the same minimum distance. We now formally treat the dimension 4 case's minimum distances.
\begin{definition}
The \textit{minimum distance} of a code $C$ is 
\begin{equation*}
    d(C) = \underset{f\ne g\in\mathcal{L}(P)\setminus\{0\}}{min} h(\epsilon(f),\epsilon(g)),
\end{equation*}
where $h(\cdot,\cdot)$ is the Hamming metric.
\end{definition}
Recall that the Hamming metric $d$ counts the number of components which are distinct between two tuples of equal length. Alternatively, we will use the following fact:
\begin{remark}
An equivalent expression for the minimum distance of a toric code $C_P$ corresponding to a polytope $P \subseteq \RR^m$ is
\begin{equation} \label{eq:mindist}
    d(C) = (q-1)^m - \underset{0\neq f\in\mathcal{L}(P)}{max} Z(f),
\end{equation}
where $Z(f)$ is the number of zeros of $f$.
\end{remark}
Computing different maximum numbers of zeros for polynomials furnished by codes from different polytopes is a simple test that the codes are \textit{not} monomially equivalent. We also make use of a theorem by Soprunov and Soprunova.
\begin{theorem}[\cite{SS2}]
Let $P\subseteq\RR^m$ and $Q\subseteq\RR^n$. Then 
\begin{equation*}
    d(C_{P\times Q}) = d(C_P)d(C_Q).
\end{equation*}
\end{theorem}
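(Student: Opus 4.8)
The plan is to recognize $C_{P\times Q}$ as the tensor product of the codes $C_P$ and $C_Q$, and then run the classical weight argument for tensor product codes. First I would record the structural facts. Since $P\times Q$ is a product polytope, its lattice points are exactly the pairs, so $(P\times Q)\cap\ZZ^{m+n}=(P\cap\ZZ^m)\times(Q\cap\ZZ^n)$, and each monomial factors as $\mathbf{x}^p\mathbf{y}^{p'}$ with $p\in P\cap\ZZ^m$ and $p'\in Q\cap\ZZ^n$. Hence every $F\in\mathcal{L}(P\times Q)$ has the form $F(\mathbf{x},\mathbf{y})=\sum_{p,p'}c_{p,p'}\mathbf{x}^p\mathbf{y}^{p'}$, and I would arrange the corresponding codeword $\epsilon(F)$ as a $(q-1)^m\times(q-1)^n$ matrix $W$ whose rows are indexed by $\mathbf{x}\in(\FF_q^*)^m$ and whose columns are indexed by $\mathbf{y}\in(\FF_q^*)^n$. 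Because these codes are linear over $\FF_q$, the minimum distance equals the minimum Hamming weight of a nonzero codeword, so it suffices to control the number of nonzero entries of $W$.

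The key observation, which I would isolate as a short lemma, is that for each fixed $\mathbf{x}_0$ the row $\mathbf{y}\mapsto F(\mathbf{x}_0,\mathbf{y})=\sum_{p'}\bigl(\sum_p c_{p,p'}\mathbf{x}_0^p\bigr)\mathbf{y}^{p'}$ lies in $\mathcal{L}(Q)$, so it is a codeword of $C_Q$; symmetrically, each column $\mathbf{x}\mapsto F(\mathbf{x},\mathbf{y}_0)$ is a codeword of $C_P$. For the lower bound $d(C_{P\times Q})\ge d(C_P)\,d(C_Q)$, take any $F\ne 0$. Some column is a nonzero codeword of $C_P$ and therefore has at least $d(C_P)$ nonzero entries, which flags at least $d(C_P)$ rows of $W$ that are not identically zero. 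Each such row is a nonzero codeword of $C_Q$ and so contributes at least $d(C_Q)$ nonzero entries, giving a total weight of at least $d(C_P)\,d(C_Q)$.

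For the matching upper bound I would exhibit an explicit codeword of exactly this weight. Let $f\in\mathcal{L}(P)$ and $g\in\mathcal{L}(Q)$ be polynomials whose codewords attain the minimum weights $d(C_P)$ and $d(C_Q)$, respectively. Writing $f=\sum_p a_p\mathbf{x}^p$ and $g=\sum_{p'}b_{p'}\mathbf{y}^{p'}$, the product $fg=\sum_{p,p'}a_p b_{p'}\mathbf{x}^p\mathbf{y}^{p'}$ belongs to $\mathcal{L}(P\times Q)$, and the associated function $F(\mathbf{x},\mathbf{y})=f(\mathbf{x})g(\mathbf{y})$ is nonzero precisely when both factors are nonzero. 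The number of such pairs is the product of the two weights, so $\epsilon(F)$ has weight exactly $d(C_P)\,d(C_Q)$, yielding $d(C_{P\times Q})\le d(C_P)\,d(C_Q)$. Combining the two bounds gives the claimed equality.

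I expect the lower bound to be the only genuine subtlety: one must verify that the rows flagged as active by a single nonzero column really are \emph{nonzero} codewords of $C_Q$ (they are, since each has a nonzero entry in that column), so that the bound $d(C_Q)$ applies to each of them independently and the contributions add. The upper bound, together with the identification of $\mathcal{L}(P\times Q)$ with the tensor product $\mathcal{L}(P)\otimes\mathcal{L}(Q)$, is routine.
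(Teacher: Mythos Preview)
Your argument is correct. The paper, however, does not supply its own proof of this statement: it is quoted as a result of Soprunov and Soprunova \cite{SS2} and used as a black box to compute the minimum distances of the degenerate embeddings $P'^{(i)}_4$ and, later, of the unit pyramids with signatures $(2,2)$ and $(3,1)$. So there is nothing in the paper to compare your proof against beyond the citation.

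For what it is worth, your approach is exactly the classical tensor-product weight argument, and it matches in outline the proof given in the cited source: identify $\mathcal{L}(P\times Q)$ with $\mathcal{L}(P)\otimes\mathcal{L}(Q)$ via the factorization of monomials, arrange a codeword as a $(q-1)^m\times(q-1)^n$ array whose rows lie in $C_Q$ and columns in $C_P$, and run the two-step counting. The upper bound via a separable witness $F=fg$ and the lower bound via ``a nonzero column flags $d(C_P)$ nonzero rows, each contributing $d(C_Q)$'' are both handled cleanly. The one point you explicitly worried about---that the flagged rows are genuinely nonzero so that $d(C_Q)$ applies to each---is indeed the only place one could slip, and you addressed it.
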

For example, if $P$ is a polytope in $\RR^2$ and we take $Q$ to be $\{\textbf{0}\}$ in $\RR^1$, then the product $P\times Q$ is the embedding of $P$ in $\RR^3$. We can thereby determine the minimum distances of the degenerate cases of 3-fold toric codes. In particular, the four lattice equivalence classes of polygons with 4 lattice points represented in Figure \ref{fig:fourpoints} (each denoted $P^{(i)}_4$ for $i=1,2,3,4$ as in \cite{LSchwarz}) can be embedded in $\RR^3$ to obtain these pedantic exceptions. We will denote the $\RR^3$ embedding of $P^{(i)}_4$ as $P'^{(i)}_4$. Their resulting minimum distances are as listed:
\begin{itemize}
    \item $d(C_{P'^{(1)}_4}) = (q-1)^3-3(q-1)^2$
    \item $d(C_{P'^{(2)}_4}) = (q-1)^3-2(q-1)^2$
    \item $d(C_{P'^{(3)}_4}) = (q-1)^3-(2q-3)(q-1)$
    \item $d(C_{P'^{(4)}_4}) > (q-1)^3-(1+q+2\sqrt{q})(q-1)$
\end{itemize}
The fourth of these polygons is often called the ``exceptional triangle'' in this setting, possibly because its minimum distance formula remains elusive.

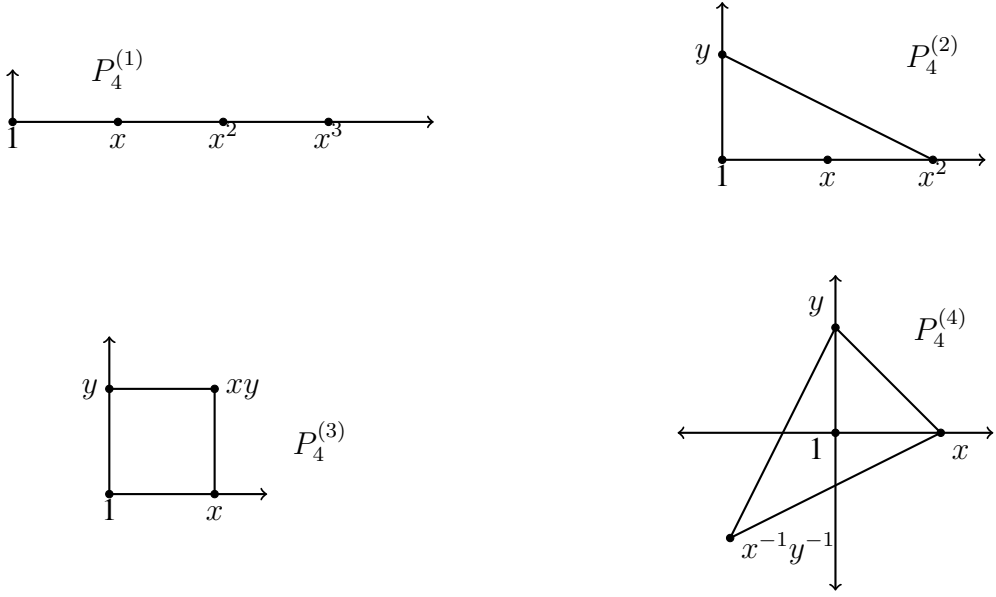
\begin{figure}[t]
    \label{fig:fourpoints}
    \centering
    \begin{minipage}{.5\textwidth}
        \centering
        \begin{tikzpicture}[scale=1.4]
            \draw [thick, <->] (0,.5) -- (0,0) -- (4,0);
            \node[above] at (0,-.35) {1};
            \node[above] at (1,-.35) {$x$};
            \node[above] at (2,-.35) {$x^2$};
            \node[above] at (3,-.35) {$x^3$};
            \foreach \x in {0,1,2,3}
                \draw[fill] (\x,0) circle [radius=0.035];
            \node at (1,.5) {$P^{(1)}_4$};
        \end{tikzpicture}
    \end{minipage}%
    \begin{minipage}{.5\textwidth}
        \centering
        \begin{tikzpicture}[scale=1.4]
            \draw [thick, <->] (0,1.5) -- (0,0) -- (2.5,0);
            \draw [thick] (0,1) -- (2,0);
            \node[above] at (0,-.35) {1};
            \node[above] at (1,-.35) {$x$};
            \node[above] at (2,-.35) {$x^2$};
            \node[left] at (0,1) {$y$};
            \foreach \x in {0,1,2}
                \draw[fill] (\x,0) circle [radius=0.035];
            \draw[fill] (0,1) circle [radius=0.035];
            \node at (2,1) {$P^{(2)}_4$};
    \end{tikzpicture}
    \end{minipage}
    
    \vspace{1cm}
    
    \begin{minipage}{.5\textwidth}
        \centering
        \begin{tikzpicture}[scale=1.4]
            \draw [thick, <->] (0,1.5) -- (0,0) -- (1.5,0);
            \draw [thick] (1,0) -- (1,1) -- (0,1);
            \node[above] at (0,-.35) {1};
            \node[above] at (1,-.35) {$x$};
            \node[left] at (0,1) {$y$};
            \node[right] at (1,1) {$xy$};
            \foreach \x in {0,1}
                \foreach \y in {0,1}
                    \draw[fill] (\x,\y) circle [radius=0.035];
            \node at (2,.5) {$P^{(3)}_4$};
        \end{tikzpicture}
    \end{minipage}%
    \begin{minipage}{.5\textwidth}
        \centering
        \begin{tikzpicture}[scale=1.4]
            \draw [thick, <->] (0,1.5) -- (0,0) -- (1.5,0);
            \draw [thick, <->] (0,-1.5) -- (0,0) -- (-1.5,0);
            \draw [thick] (0,1) -- (1,0) -- (-1,-1) -- (0,1);
            \node[above left] at (0,-.35) {1};
            \node[above right] at (1,-.35) {$x$};
            \node[above right] at (-1,-1.35) {$x^{-1}y^{-1}$};
            \node[above left] at (0,1) {$y$};
            \draw[fill] (0,0) circle [radius=0.035];
            \draw[fill] (0,1) circle [radius=0.035];
            \draw[fill] (1,0) circle [radius=0.035];
            \draw[fill] (-1,-1) circle [radius=0.035];
            \node at (1,1) {$P^{(4)}_4$};
    \end{tikzpicture}
    \end{minipage}
    
    \caption{Lattice equivalence class representatives of polygons with 4 lattice points.}
\end{figure}

We turn now to a formula for the minimum distance in dimension 4 of the nondegenerate cases.

\subsection{Formula for 4 dimensions}

In order to find the minimum distance of $C_{T(s,t)}$, we use (\ref{eq:mindist}). Then the polynomial with the maximum number of zeros will return us the minimum distance. According to the shape of $T(s,t)$, we know that such an $f\neq 0$ must be of the form 
\begin{equation} \label{eq:polyemptytetra}
    f(x,y,z) = a + bx + cz + dx^sy^tz,
\end{equation}
where $a$, $b$, $c$, and $d$ are not all $0$ in $\FF_q$. For simplicity, define 
\begin{equation*}
    f_x(y,z) = a + bx + cz + d_x xy^tz
\end{equation*}
when $x$ is fixed and $d_x=dx^{s-1}$. By doing so, we are able to count the vanishing points of $f$ for each $x\in\FF_q^*$ without concern for the value of $s$ since we know that each will also be a vanishing point for $f_x$ for some $x$ as well. Furthermore, each vanishing point of $f_x$ corresponds to one of $f$. It follows that 
\begin{equation*}
    Z(f) = \sum_{x} Z(f_x).
\end{equation*}

\mindist*

To prove the theorem, note that if $f(x,y,z)=1+x$ for example, then so long as $x=-1$ there are $q-1$ choices for each of $y$ and $z$ to obtain a vanishing point and so $\underset{f}{\max} \, Z(f)\ge(q-1)^2$ for any $s$ and $t$. We will first show that only when $t$ and $q-1$ are not relatively prime can we find another nonzero polynomial with more zeros than $1+x$.

\begin{prop}
If $f\in\mathcal{L}(T(s,t))$ has no zero coefficients, then
\begin{equation*}
    Z(f) \le q\gcd(t,q-1) + (q-1)(q-3).
\end{equation*}
Furthermore, there exists an $f$ for which this count is attained.
\end{prop}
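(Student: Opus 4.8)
The plan is to analyze $Z(f_x)$ for a fixed $x \in \FF_q^*$ and then sum over $x$. With $f_x(y,z) = a + bx + cz + d_x x y^t z$ and all of $a,b,c,d$ nonzero (so $b x \neq 0$ and $d_x x \neq 0$), I would treat this as a polynomial in $z$ for each fixed $y$: writing $\alpha = a + bx$ and $\beta = c + d_x x y^t$, we have $f_x = \alpha + \beta z$. If $\beta \neq 0$ there is exactly one value of $z \in \FF_q^*$ solving $f_x = 0$, namely $z = -\alpha/\beta$, and it is a genuine zero unless $\alpha = 0$; if $\beta = 0$ then $f_x \equiv \alpha$, which vanishes for all $q-1$ values of $z$ if $\alpha = 0$ and for none otherwise. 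So the count $Z(f_x)$ breaks into cases according to (i) whether $\alpha = a + bx = 0$, and (ii) how many $y \in \FF_q^*$ satisfy $c + d_x x y^t = 0$, i.e. $y^t = -c/(d_x x)$.

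The key number-theoretic input is that the map $y \mapsto y^t$ on $\FF_q^*$ has image of size $(q-1)/\gcd(t,q-1)$, and each value in the image has exactly $g := \gcd(t,q-1)$ preimages. So the equation $y^t = -c/(d_x x)$ has either $0$ or $g$ solutions. I would split into two main cases. \textbf{Case $a + bx \neq 0$:} for each $y$ with $\beta \neq 0$ we get one zero $z$, and for each of the (at most $g$) values of $y$ with $\beta = 0$ we get no zeros; hence $Z(f_x) \le (q-1) - 0$ from the $\beta \neq 0$ rows contributing one each, but we must subtract the rows where $\beta = 0$, giving $Z(f_x) \le (q-1) - (\text{number of } y \text{ with } \beta = 0)$. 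Since $\alpha \neq 0$, these $\beta = 0$ rows contribute nothing, so $Z(f_x) \le q-1$, and this is only attained if no $y$ kills $\beta$; if $g$ values of $y$ kill $\beta$ then $Z(f_x) \le q-1-g$. \textbf{Case $a + bx = 0$:} there is at most one such $x$ (since $b \neq 0$), call it $x_0 = -a/b$; then $f_{x_0} = (c + d_{x_0} x_0 y^t) z$, which vanishes whenever $z$ is anything and $c + d_{x_0} x_0 y^t = 0$ (that is $g$ choices of $y$, giving $g(q-1)$ zeros) or whenever $z = 0$ — but $z \in \FF_q^*$, so only the former, plus the rows with $c + d_{x_0} x_0 y^t \neq 0$ contribute one zero each only if... actually here $f_{x_0} = \beta z$ with $\beta z = 0$ iff $\beta = 0$ (as $z \neq 0$), so $Z(f_{x_0}) = g(q-1)$ if $-c/(d_{x_0}x_0)$ is a $t$-th power and $0$ otherwise; the maximum is $g(q-1)$.

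Putting the pieces together: at most one $x$ (namely $x_0$) can fall in the second case, contributing at most $g(q-1)$; and I must still check whether, for that same $x_0$, the corresponding value $-c/(d_{x_0}x_0)$ being a $t$-th power forces anything about the other $x$'s. For the remaining $q-2$ values of $x$ we have $Z(f_x) \le q-1$, but I expect the main obstacle is the bookkeeping that shows we cannot simultaneously have $Z(f_{x_0}) = g(q-1)$ and $Z(f_x) = q-1$ for all other $x$ — in fact I suspect one shows that whenever the $x_0$ row achieves $g(q-1)$, at least one other row loses a zero, or more likely the clean bound comes from: summing $Z(f_x) \le q-1$ over all $q-2$ rows $x \neq x_0$ plus $g(q-1)$ for the $x_0$ row gives $(q-2)(q-1) + g(q-1)$, which is larger than the claimed bound, so a sharper argument is needed. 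The resolution should be that in the first case we generically have $Z(f_x) \le q-1-g$ for $x \neq x_0$ unless $-c/(d_x x)$ fails to be a $t$-th power, and counting how many $x \in \FF_q^*$ make $d_x x = d x^s$ (or rather $-c/(dx^s)$) a non-$t$-th-power versus a $t$-th power — again governed by $g$ — yields the trade-off that caps the total at $q \cdot g + (q-1)(q-3)$. The hard part will be organizing this final accounting cleanly and exhibiting the extremal $f$ (likely one where $a + bx_0 = 0$ is arranged and the $t$-th power conditions are tuned to hit the bound), so I would end by constructing an explicit $a,b,c,d$ and the corresponding $s,t$ realizing equality.
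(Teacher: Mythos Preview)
Your setup matches the paper's exactly: split by whether $x=x_0:=-a/b$, and for each $x$ count the $y$'s with $\beta=c+d_xxy^t=0$ (call this $N(x)$), obtaining $Z(f_{x_0})=(q-1)N(x_0)$ and $Z(f_x)=(q-1)-N(x)$ for $x\neq x_0$. Where you stall is precisely the final accounting, and there is one concrete missing idea that resolves it.

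The key observation you are lacking is the exact identity
\[
\sum_{x\in\FF_q^*} N(x)=q-1.
\]
Indeed, $\sum_x N(x)$ counts pairs $(x,y)\in(\FF_q^*)^2$ with $x^sy^t=-c/d$. Since $\gcd(s,t)=1$ for an empty tetrahedron $T(s,t)$, the homomorphism $(x,y)\mapsto x^sy^t$ from $(\FF_q^*)^2$ onto $\FF_q^*$ is surjective, so every fibre has size exactly $q-1$. With this identity in hand, summing your per-slice counts gives an \emph{equality}, not merely a bound:
\[
Z(f)=(q-1)N(x_0)+\sum_{x\neq x_0}\bigl[(q-1)-N(x)\bigr]=qN(x_0)+(q-1)(q-2)-(q-1)=qN(x_0)+(q-1)(q-3).
\]
Now $N(x_0)\in\{0,g\}$ with $g=\gcd(t,q-1)$, so $Z(f)\le qg+(q-1)(q-3)$, and equality is achieved simply by choosing $a,b,c,d$ so that $-c/(dx_0^{s})$ lies in the image of $y\mapsto y^t$ (for instance $a=c=-1$, $b=d=1$ works when $s$ is adjusted so that $x_0^s=1$, or more directly one picks $c,d$ after $a,b$). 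Your crude bound $(q-2)(q-1)+g(q-1)$ came from ignoring the subtractions $-N(x)$ in the $x\neq x_0$ slices; the identity above is exactly the ``trade-off'' you were looking for, and it makes the bookkeeping a one-line computation rather than a separate counting argument over $t$-th powers. (Minor note: your phrase ``$Z(f_x)\le q-1-g$ \dots\ unless $-c/(dx^s)$ fails to be a $t$-th power'' has the cases reversed---it is when the value \emph{is} a $t$-th power that $N(x)=g$ and you lose $g$ zeros.)
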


\begin{proof}
Let $x=-ab^{-1}$. Then 
\begin{equation*}
    f_{-ab^{-1}}(y,z) = z(c+d_{-ab^{-1}}(-ab^{-1})y^t).
\end{equation*}
Write $N(x)$ for the number of values of $y$ for which $c+d_xxy^t=0$. In this case, we count $(q-1)N(-ab^{-1})$ zeros since $z$ is free to be any unit.

On the other hand, we also must consider points $(x,y,z)$ at which $f$ vanishes for which $x\ne-ab^{-1}$. Fix $x$. There are $(q-1)-N(x)$ choices of $y$ for which $c+d_xxy^t\ne0$. Therefore we have $\underset{x\ne -ab^{-1}}{\sum} [(q-1)-N(x)]$ combinations of $x$ and $y$ which allow us to set 
\begin{equation*}
    z = -(a+bx)(c+dx^sy^t)^{-1}.
\end{equation*}

Considering both of these subcases, then, the total number of zeros is
\begin{align} \label{eq:zf1}
    (q-1)N(-ab^{-1}) &+ \underset{x\ne -ab^{-1}}{\sum} [(q-1)-N(x)] \nonumber\\
        &= qN(-ab^{-1}) + (q-1)(q-2) - \underset{x}{\sum} N(x)\nonumber\\
        &= qN(-ab^{-1}) + (q-1)(q-3)
\end{align}
since certainly every value of $y$ will satisfy $c+d_xxy^t=0$ for some $x$, and so the summation term is equal to $q-1$. We obtain the most zeros of $f$ when we maximize $N(-ab^{-1})$ which occurs when $y^t=\frac{bc}{ad_x}$ has solutions. In particular, if $t$ and $q-1$ are not relatively prime, there are as many as $\gcd(t,q-1)$ solutions. Using this fact with (\ref{eq:zf1}), we conclude that if $f$ has no zero coefficients, then
\begin{equation*}
    Z(f)\le q\gcd(t,q-1) + (q-1)(q-3).
\end{equation*}
\end{proof}

So more generally we have
\begin{equation} \label{eq:maxzf1}
    \underset{0\neq f\in\mathcal{L}(P)}{\max} Z(f) \ge q\gcd(t,q-1) + (q-1)(q-3).
\end{equation}
As we will show, if $\gcd(t,q-1)>1$, then (\ref{eq:maxzf1}) gives the highest zero count; if else, then no polynomial vanishes at more than $(q-1)^2$ points. As with our example of $f=1+x$, the next proposition covers the other cases.

\begin{prop}
If $f\in\mathcal{L}(T(s,t))$ has at least one zero coefficient, then 
\begin{equation*}
    Z(f)\le (q-1)^2.
\end{equation*}
\end{prop}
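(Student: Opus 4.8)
The plan is to exploit that the variable $z$ occurs linearly in $f$. Write $f(x,y,z) = A(x) + z\,B(x,y)$, where $A(x) = a + bx$ and $B(x,y) = c + dx^sy^t$. For a fixed pair $(x,y) \in (\FF_q^*)^2$, the equation $f(x,y,z)=0$ is linear in $z$: if $B(x,y)\neq 0$ it has the unique solution $z = -A(x)/B(x,y)$, which lies in $\FF_q^*$ exactly when $A(x)\neq 0$; and if $B(x,y)=0$ it is solved by every $z\in\FF_q^*$ when $A(x)=0$ and by no $z$ otherwise. Hence, setting $\mathcal{D} = \{(x,y)\in(\FF_q^*)^2 : A(x)=B(x,y)=0\}$,
\begin{equation*}
    Z(f) = (q-1)\,\lvert\mathcal{D}\rvert + \#\{(x,y)\notin\mathcal{D} : A(x)\neq 0 \neq B(x,y)\},
\end{equation*}
and only this second summand records contributions of pairs outside $\mathcal{D}$. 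I would then split into cases according to the behaviour of the affine polynomial $A$.

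If $A$ has no zero in $\FF_q^*$ (that is, $b=0,\ a\neq 0$, or $b\neq 0,\ a=0$), then $\mathcal{D}=\emptyset$ and each pair contributes at most one zero, so $Z(f)\le (q-1)^2$ at once. If $a=b=0$, then $f = z\,B(x,y)$; when $d=0$ this forces $Z(f)=0$, and when $d\neq 0$ we get $Z(f) = (q-1)\,\#\{(x,y): x^sy^t = -c/d\}$. Here I would use that, since $\gcd(s,t)=1$ (from White's theorem), the group homomorphism $(x,y)\mapsto x^sy^t$ from $(\FF_q^*)^2$ onto $\FF_q^*$ is surjective with every fibre of size $q-1$; thus the set in question is either empty (if $c=0$) or has exactly $q-1$ elements, again giving $Z(f)\le (q-1)^2$.

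The remaining case is $a\neq 0$ and $b\neq 0$, so $A$ has exactly one zero in $\FF_q^*$, namely $x_0 = -a/b$. This is the case that looks dangerous, because a priori the $(q-1)$-fold contribution of $\mathcal{D} = \{x_0\}\times\{y : B(x_0,y)=0\}$ could be as large as $(q-1)\gcd(t,q-1)$. Here the hypothesis is exactly what is needed: since $f$ has a vanishing coefficient and $a,b\neq 0$, we must have $c=0$ or $d=0$. If exactly one of $c,d$ is zero, then $B(x,y)$ equals $dx^sy^t$ or $c$, either way nowhere zero on the torus, so $\mathcal{D}=\emptyset$ and $Z(f) = \#\{(x,y): A(x)\neq 0\} = (q-2)(q-1)$. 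If $c=d=0$, then $f = A(x) = a+bx$ vanishes exactly on $\{x_0\}\times(\FF_q^*)^2$, so $Z(f) = (q-1)^2$. In every case $Z(f)\le (q-1)^2$, which would complete the argument.

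I expect the only real subtleties to be (i) obtaining the exact fibre count $q-1$ for the map $(x,y)\mapsto x^sy^t$, which is where $\gcd(s,t)=1$ is used, and (ii) recognising that the apparently worst scenario — where $A$ has a single root coinciding with zeros of $B$ — is precisely the configuration ruled out by the hypothesis that some coefficient vanishes. Once those two points are in place, the remaining bookkeeping is routine.
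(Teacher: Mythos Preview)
Your argument is correct. The decomposition $f = A(x) + z\,B(x,y)$ and the resulting formula
\[
    Z(f) = (q-1)\,\lvert\mathcal D\rvert + \#\{(x,y)\notin\mathcal D : A(x)\ne 0\ne B(x,y)\}
\]
are valid, and each of your cases is handled cleanly. The use of $\gcd(s,t)=1$ to see that $(x,y)\mapsto x^sy^t$ has every nonzero fibre of size exactly $q-1$ is the right way to settle the $a=b=0$, $c,d\neq 0$ subcase.

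The paper proceeds differently: it organizes the case analysis by \emph{which} coefficient vanishes ($d=0$; $c=0$, noting the symmetry with $b=0$; $a=0$) and in each case counts zeros directly after fixing a convenient variable. Your approach instead organizes the analysis around the affine polynomial $A(x)=a+bx$ (no root in $\FF_q^*$; identically zero; exactly one root) and then observes that the hypothesis forces $B$ to behave tamely in the only dangerous case. This buys you a more systematic framework: the linear-in-$z$ structure is isolated once, and the case split thereafter is driven by a single dichotomy rather than by four coefficient-by-coefficient checks. It also covers the situations with several vanishing coefficients (e.g.\ $a=b=0$) explicitly, whereas the paper's subcases each tacitly assume the other coefficients are nonzero (for instance, fixing $x=-ab^{-1}$ in the $c=0$ case presumes $b\ne 0$). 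The paper's route, on the other hand, avoids invoking $\gcd(s,t)=1$ and keeps the argument entirely at the level of solving for one coordinate at a time, matching the style of the surrounding no-zero-coefficient proposition.
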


\begin{proof}
We will show that any polynomial $f$ with at least one zero coefficient has at most $(q-1)^2$ vanishing points, using again the form of $f$ in (\ref{eq:polyemptytetra}) for consistency. The subcases this time are as follows:
\begin{itemize}
    \item Suppose $d=0$. Then $f(x,y,z)=a+bx+cz$. If $a+bx=0$, then $cz=0$ implies that $c=0$. Otherwise $a+bx\neq 0$, so we can let $z=-(a+bx)c^{-1}$. In either case we have $Z(f)=(q-1)^2$.
    \item Suppose $c=0$. (Note that because (\ref{eq:polyemptytetra}) has the aforementioned aesthetic symmetry between $x$ and $z$ after fixing $x$, this subcase is treated the same way as the subcase $b=0$.) Fixing $x=-ab^{-1}$, we have $f_{-ab^{-1}}(y,z)=d_{-ab^{-1}}(-ab^{-1})y^tz$. We obtain $d_{-ab^{-1}}(-ab^{-1})y^tz=0$, a contradiction, so for whatever choice of $y$ we can choose $z=-(a+bx)(d_xxy^t)^{-1}$ and have $Z(f)=(q-1)(q-2)$.
    \item Suppose $a=0$. Then $f(x,y,z)=bx+cz+dx^sy^tz$. Fix $y$. If $c+dx^sy^t=0$, then $bx=0$, a contradiction. Instead, so long as $x\neq -c(dy^t)^{-s}$, we can set $z=-bx(c+dx^sy^t)^{-1}$ and have $Z(f)=(q-1)(q-2)$.
\end{itemize}
By exhausting these three subcases we find that if $f$ has at least one zero coefficient, it has at most $(q-1)^2$ vanishing points; furthermore, we see that regardless of whether $t$ and $q-1$ are relatively prime we have
\begin{equation} \label{eq:maxzf2}
    \underset{0\neq f\in\mathcal{L}(P)}{\max} Z(f) \ge (q-1)^2.
\end{equation}
Thus we have our lower bound on $Z(f)$.
\end{proof}

What remains is a compilation of our results from the two cases in order to have our formula.

\mindist*

\begin{proof}[Proof of Theorem \ref{thm:mindist4}] The maximum number of vanishing points is equal to either the expression on the right of (\ref{eq:maxzf1}) or the one on the right of (\ref{eq:maxzf2}) since both expressions give upper bounds on the maximum number of vanishing points in the two distinct cases. It follows that if $t$ and $q-1$ are relatively prime and $q>2$, then (\ref{eq:maxzf2}) becomes an equality since 
\begin{equation*}
    q+(q-1)(q-3)<2(q-1)+(q-1)(q-3)=(q-1)^2.
\end{equation*}
If $\gcd(t,q-1)\ge 2$, then 
\begin{equation*}
    q\gcd(t,q-1)\ge 2q> 2(q-1),
\end{equation*}
so (\ref{eq:maxzf1}) becomes an equality. By use with (\ref{eq:mindist}), we conclude the proof of the proposition.
\end{proof}

\section{Classification of toric 3-fold codes of dimension 4}

In order to compare generator matrices, we introduce the following notation. Consider a partition of $(\FF_q^*)^3$ according to ordered pairs of the first and last coordinates and an additional condition involving the parameter $t$; in particular, define the partition 
\begin{equation*}
    \mathscr{P}_t = \{P(x_0,z_0,A) \ni (x,y,z)\in(\FF_q^*)^3:(x,z)=(x_0,z_0), y^t=A\}.
\end{equation*}
A cell in $\mathscr{P}_t$ may be empty, namely one for which $y^t=A$ has no solutions. The columns of $G_1$ and $G_2$ are indexed by the elements of $(\FF_q^*)^3$, so let $I_i(x_0,z_0,A)$ denote the set of column indices in $G_i$ which correspond to points in a cell in $\mathscr{P}_{t_i}$. For example, a representative column would be
\begin{equation*}
    G_1e_m = 
        \begin{bmatrix}
            1 \\
            x_0 \\
            z_0 \\
            x_0^sAz_0   
        \end{bmatrix},
\end{equation*}
so $m\in I_1(x_0,z_0,A)$.

\classone*

\begin{proof}[Proof of Theorem \ref{thm:class1}]
$(\Rightarrow)$ By contrapositive, supposing that $\gcd(t_1,q-1)\neq\gcd(t_2,q-1)$ immediately implies that $d(C_1)\neq d(C_2)$, so the codes are monomially inequivalent.

$(\Leftarrow)$ Let $l$ be the common $\gcd$, and let $\alpha$ be a primitive element of $\FF_q$. Denote by $G_1$ and $G_2$ generator matrices of $C_1$ and $C_2$, respectively, for which each column in either is led by a $1$ so that we need only worry to reorder the columns to determine equivalence. In other words, we know that equivalence means that there exist matrices - an invertible diagonal $\Delta$ and permutation $\Pi$ - for which (\ref{eq:moneq}) holds. However, at least the first rows are easily matched because they are all $1$s, so $\Delta=I$ and we need only confirm that there exists a satisfactory permutation $\Pi$.

We will show that $I_1$ and $I_2$ have the same cardinality for each $x_0,z_0,A\in\FF_q^*$. There exist integers $k_1$, $k_2$, and $k_3$ such that 
\begin{align*}
    t_1 &= k_1l\\
    t_2 &= k_2l\\
    q-1 &= k_3l
\end{align*}
Suppose $\alpha^m$ is a solution to $y^{t_1}=A$. Then $(\alpha^m)^{k_1l}=A$. Every solution can be found from $\alpha^m$ to be of the form $\alpha^{m+jk_3}$, $j=0,1,\ldots,l-1$. Similarly, each solution to $y^{t_2}=A$ is of the form $\alpha^{n+jk_3}$ for some $n$, so the solutions to either equation differ by multiplication by $\alpha^{n-m}$ and are thereby subject to a bijective correspondence. It follows that if $\lvert I_1\rvert>0$, then $\lvert I_1\rvert = \lvert I_2\rvert = l$. Supposing, on the other hand, that $y^{t_1}=A$ has no solutions in $\FF_q^*$ implies that $A$ is not in the range of the Frobenius automorphism $a\mapsto a^l$, so it is also the case that neither does $y^{t_2}=A$ have solutions. In any case, therefore, $\lvert I_1\rvert = \lvert I_2\rvert$.

Finally, we let $\Pi$ be any permutation on $(q-1)^3$ letters such that for all $x_0,z_0,A\in\FF_q^*$ we obtain 
\begin{equation*}
    \sum_{m\in I_1(x_0,z_0,A)} e_m = \sum_{n\in I_2(x_0,z_0,A)} \Pi e_n.
\end{equation*}
Then $\Pi$ is sufficient to satisfy our relation in (\ref{eq:moneq}) to confirm that $C_1$ and $C_2$ are monomially equivalent.
\end{proof}

Note that our approach to proving the Theorem \ref{thm:class1} is followed very closely again in Section 5 with Proposition \ref{prop:521}. Now we turn to proving our second classification theorem.

\classtwo*

\begin{proof}[Proof of Theorem \ref{thm:class2}]
$(\Rightarrow)$ Let $G_1$ and $G_2$ be generator matrices of $C_1$ and $C_2$, respectively. We will first show that either of the conditions implies monomial equivalence between $C_1$ and $C_2$.

Firstly, let $l=\gcd(t,q-1)$ and suppose $s_1\equiv s_2\bmod l$. Then there exist $j,k\in\ZZ$ such that $t=jl$ and $s=s_2-s_1=kl$. We seek to show that for any $x$ and $y_2$ there exists $y_1$ to satisfy 
\begin{equation} \label{eq:matchxy}
    x^{s_1}y_1^t = x^{s_2}y_2^t.
\end{equation}
For simplicity, let $n,i_2\in\ZZ$ give $x=\alpha^n$ and $y_2=\alpha^{i_2}$, where $\alpha$ is a primitive element of the field. Then by setting 
\begin{equation*}
    i_1 \equiv i_2 + nkj^{-1} \bmod \left(\frac{q-1}{l}\right),
\end{equation*}
we get that 
\begin{equation*}
    i_1(jl) \equiv i_2(jl) + n(kl) \bmod (q-1).
\end{equation*}
(Note that $j^{-1} \bmod \left(\frac{q-1}{l}\right)$ exists because $\gcd(j, \frac{q-1}{l})=1$.) This is equivalent to 
\begin{equation*}
    (\alpha^{i_1})^t = (\alpha^n)^s(\alpha^{i_2})^t
\end{equation*}
which is in turn equivalent to (\ref{eq:matchxy}), as desired. Using the same ordering as above, we have that $C_1$ and $C_2$ are monomially equivalent.

The second condition for monomial equivalence follows from White's theorem classifying four point polytopes in $\RR^3$. These two conditions are sufficient for monomial equivalence.

$(\Leftarrow)$ We now turn to prove that these conditions are necessary. Suppose $T(s_1,t)$ and $T(s_2,t)$ are not lattice equivalent but $C_1$ and $C_2$ are monomially equivalent. That is, the second condition does not hold. This supposition amounts to a stipulation that if $G_2=G_1\Delta\Pi$ as in Definition \ref{def:moneq}, then $\Delta=I$. (That is, a nontrivial diagonal matrix amounts to a reordering of the rows of $G$ (or lattice points in $P$). See \cite[Theorem 4]{LSchwarz}.) Furthermore, assume that $s_2-s_1\not\equiv 0 \bmod \gcd(t,q-1)$. That is, the first condition does not hold. We proceed toward a contradiction. We obtain $s_2-s_1=kl+r$ for some $k$ and $0<r<l$, where again $l=\gcd(t,q-1)$.

Because there exists a permutation between the generator matrices of $C_1$ and $C_2$, we may define a map $\FF_q^*\rightarrow\FF_q^*$ which sends $y_2$ to the value of $y_1$ satisfying
\begin{equation*}
    y_1^t=\alpha^{s_2-s_1}y_2^t
\end{equation*}
where $\alpha$ is a primitive element of $\FF_q^*$. Let $\phi$ denote the Frobenius automorphism $y\mapsto y^t$. Since $1\in Im(\phi)$ but $1\notin \alpha^r Im(\phi)$ for $r<l$, we obtain a contradiction. We conclude that the two conditions stated in Theorem \ref{thm:class2} are both sufficient and necessary for monomial equivalence.
\end{proof}

For completeness, we will exhibit that $C_1$ and $C_2$ are monomially equivalent if $s_1 \equiv \pm s_2^{\pm 1} \bmod t$ by explicit affine unimodular transformations.

\textsc{Case 1.} Suppose $s_1\equiv s_2 \bmod t$. Then $\frac{s_1-s_2}{t}\in\ZZ$. We can make use of the assignment rule 
\begin{equation*}
    \begin{bmatrix}
            x \\
            y \\
            z  
    \end{bmatrix}
    \mapsto
    \begin{bmatrix}
            1 & \frac{s_1-s_2}{t} & 0 \\
            0 & 1 & 0 \\
            0 & 0 & 1 
        \end{bmatrix}
    \begin{bmatrix}
            x \\
            y \\
            z  
    \end{bmatrix}
\end{equation*}
which maps each of the vertices of $T(s_2,t)$ to those of $T(s_1,t)$. Thus the polytopes are lattice equivalent.

\textsc{Case 2.} Suppose $s_1\equiv s_2^{-1} \bmod t$. Then $\frac{-s_1s_2+1}{t}\in\ZZ$. This time we will use the assignment rule 
\begin{equation*}
    \begin{bmatrix}
            x \\
            y \\
            z  
    \end{bmatrix}
        \mapsto
        \begin{bmatrix}
            s_1 & \frac{-s_1s_2+1}{t} & 0 \\
            t & -s_2 & 0 \\
            0 & 0 & -1 
        \end{bmatrix}
        \begin{bmatrix}
            x \\
            y \\
            z  
        \end{bmatrix}
            +
        \begin{bmatrix}
            0 \\
            0 \\
            1  
        \end{bmatrix}.
\end{equation*}
Once again, our map is an affine unimodular transformation and sends the vertices of $T(s_2,t)$ to those of $T(s_1,t)$, so again we have lattice equivalence.

\textsc{Case 3.} Suppose $s_1\equiv -s_2$. Then $\frac{s_1+s_2}{t}\in\ZZ$. Thus we can define a similar transformation mapping vertices of $T(s_2, t)$ to those of $T(s_1, t)$ as follows:
\begin{equation*}
    \begin{bmatrix}
            x \\
            y \\
            z
    \end{bmatrix}
        \mapsto
        \begin{bmatrix}
            -1 & \frac{s_1+s_2}{t} & -1\\
            0 & 1 & 0 \\
            0 & 0 & 1
        \end{bmatrix}
         \begin{bmatrix}
            x \\
            y \\
            z  
        \end{bmatrix}
            +
        \begin{bmatrix}
            1 \\
            0 \\
            0  
        \end{bmatrix}.
\end{equation*}
Thus, once again, the polytopes are lattice equivalent.

\textsc{Case 4.} Suppose $s_1 \equiv -s_2^{-1}$. Then a transformation can be constructed from lattice points of $T(s_2, t)$ to those of $T(s_1, t)$ by a composition of those mappings defined in cases 2 and 3.

The first of these affine unimodular transformations was also presented by Blanco and Santos \cite{BS4and5}. The following corollary is a consequence of the first condition listed in Theorem \ref{thm:class2}, namely that $s_1\equiv s_2\bmod\gcd(t,q-1)$.

\begin{cor}
If $\gcd(t,q-1)=1$, then $C_{T(s_1,t)}$ and $C_{T(s_2,t)}$ are monomially equivalent.
\end{cor}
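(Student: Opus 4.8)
The plan is to obtain this as an immediate specialization of Theorem~\ref{thm:class2}. The key observation is that the hypothesis $\gcd(t,q-1)=1$ makes the first of the two sufficient conditions in that theorem vacuous: any two integers are congruent modulo $1$, so $s_1\equiv s_2\bmod\gcd(t,q-1)$ holds automatically for every choice of $s_1$ and $s_2$. Thus the ``if'' direction of Theorem~\ref{thm:class2} applies directly and gives that $C_{T(s_1,t)}$ and $C_{T(s_2,t)}$ are monomially equivalent. No further work is needed.

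For readers who want a self-contained argument, I would also recall the mechanism behind the first case of the proof of Theorem~\ref{thm:class2}, now in its simplest form. When $\gcd(t,q-1)=1$, the map $\FF_q^*\to\FF_q^*$, $y\mapsto y^t$, is a bijection (raising to a power coprime to $q-1$ is an automorphism of the cyclic group $\FF_q^*$). Hence for each $x\in\FF_q^*$ and each $y_2\in\FF_q^*$ there is a unique $y_1\in\FF_q^*$ with $y_1^t=x^{s_2-s_1}y_2^t$, equivalently $x^{s_1}y_1^t=x^{s_2}y_2^t$. Taking generator matrices $G_1$ of $C_{T(s_1,t)}$ and $G_2$ of $C_{T(s_2,t)}$ with every column led by a $1$, this correspondence matches columns of $G_1$ with columns of $G_2$ so that paired columns are equal, giving $G_2=G_1\Pi$ for a permutation matrix $\Pi$; thus the codes are monomially equivalent in the sense of Definition~\ref{def:moneq}.

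There is essentially no obstacle here, since the statement is a direct corollary of Theorem~\ref{thm:class2} once one notes that congruence modulo $1$ is no constraint. The only point worth spelling out — and the only place a reader might pause in the alternative argument — is the invertibility of $y\mapsto y^t$ on $\FF_q^*$ when $\gcd(t,q-1)=1$; this also explains conceptually why $\gcd(t,q-1)$ is precisely the invariant that governs the behavior of the $y$-coordinate in these codes.
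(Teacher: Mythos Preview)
Your proof is correct and follows exactly the paper's approach: the corollary is obtained by noting that $s_1\equiv s_2\bmod 1$ holds trivially, so the first condition of Theorem~\ref{thm:class2} applies. The additional self-contained paragraph you include is a nice elaboration but not needed for the argument.
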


\begin{proof}
Since $s_1\equiv s_2 \bmod 1$ trivially (regardless of $s_1$ and $s_2$), the codes are monomially equivalent by Theorem \ref{thm:class2}.
\end{proof}

\section{On dimension 5 codes}

There are many more polytopes up to lattice equivalence in $\RR^3$ with 5 lattice points than there are with 4. Two new invariants which we use to distinguish between those which are not lattice equivalent are the \textit{width} and \textit{signature}. The width of a polytope $P\subseteq \RR^3$ is the minimum number of consecutive hyperplanes in $\RR^3$ which contain $P$. The signature of $P$ is a statement about its affine dependence. There are four signatures of width 1 polytopes with 5 lattices, listed in Table \ref{tab:1width}. 

\begin{table}[b!]
    \centering
    \begin{tabular}{|c|c|c|}
        \hline
        Sig. & Volume & Representative \\\hline
        (2,1) & (-2$t$, $t$, 0, $t$, 0) & (0, 0, 0), (1, 0, 0), (0, 0, 1), (-1, 0, 0), ($s$, $t$, 1) \\
         & $0 \le s\le \frac{t}{2}$ &  \\\hline
        (2,2) & (-1, 1, 1, -1, 0) & (0, 0, 0), (1, 0, 0), (0, 1, 0), (1, 1, 0), (0, 0, 1) \\\hline
        (3,1) & (-3, 1, 1, 1, 0) & (0, 0, 0), (1, 0, 0), (0, 1, 0), (-1, -1, 0), (0, 0, 1) \\\hline
        (3,2) & ($-s-t$, $s$, $t$, 1, -1) & (0, 0, 0), (1, 0, 0), (0, 1, 0), (0, 0, 1), ($s$, $t$, 1) \\
         & $0<s\le t$ &  \\\hline
    \end{tabular}
    \caption{Five-point polytopes of width 1 \cite{BS4and5}.}
    \label{tab:1width}
\end{table}

Refer to the lattice points of $P$ by $p_i$ and the volumes of the convex hulls of $\{p_j\}_{j\ne i}$ by $w_i$, $i=1,\ldots,5$. The $w_i$ are possibly negative depending on orientation. Then there is an affine dependence 
\begin{equation} \label{eq:affdep}
    \sum_{k=1}^5 (-1)^{k-1}w_k,\hspace{2mm}\text{ and }\hspace{2mm}\sum_{k=1}^5 (-1)^{k-1}w_k p_k.
\end{equation}
The signature of $P$ is $(i,j)$ if the dependence (\ref{eq:affdep}) has $i$ positive coefficients and $j$ negative ones. (Thus the signature is unordered because the equations in (\ref{eq:affdep}) may be multiplied by $-1$.)

We summarize the lattice equivalence classes of polytopes in this case with Table \ref{tab:2width}. Regarding the rows for signatures $(2,1)$ and $(3,2)$, note that $s$ and $t$ must be relatively prime. Thus there are an infinite number of equivalence classes with these signatures. However, we consider only finitely many when observing the toric codes over the field $\FF_q$ since there come constraints on the size of $P$ with the choice of $q$.

\begin{table}
    \centering
    \begin{tabular}{|c|c|c|}
        \hline
        Sig. & Volume & Representative \\\hline
        (3,1) & (-9, 3, 3, 3, 0) & (0, 0, 0), (1, 0, 0), (0, 1, 0), (-1, -1, 0), (1, 2, 3) \\\hline
        (4,1) & (-4, 1, 1, 1, 1) & (0, 0, 0), (1, 0, 0), (0, 0, 1), (1, 1, 1), (-2, -1, -2) \\\hline
        (4,1) & (-5, 1, 1, 1, 2) & (0, 0, 0), (1, 0, 0), (0, 0, 1), (1, 2, 1), (-1, -1, -1) \\\hline
        (4,1) & (-7, 1, 1, 2, 3) & (0, 0, 0), (1, 0, 0), (0, 0, 1), (1, 3, 1), (-1, -2, -1) \\\hline
        (4,1) & (-11, 1, 3, 2, 5) & (0, 0, 0), (1, 0, 0), (0, 0, 1), (2, 5, 1), (-1, -2, -1) \\\hline
        (4,1) & (-13, 3, 4, 1, 5) & (0, 0, 0), (1, 0, 0), (0, 0, 1), (2, 5, 1), (-1, -1, -1) \\\hline
        (4,1) & (-17, 3, 5, 2, 7) & (0, 0, 0), (1, 0, 0), (0, 0, 1), (2, 7, 1), (-1, -2, -1) \\\hline
        (4,1) & (-19, 5, 4, 3, 7) & (0, 0, 0), (1, 0, 0), (0, 0, 1), (3, 7, 1), (-2, -3, -1) \\\hline
        (4,1) & (-20, 5, 5, 5, 5) & (0, 0, 0), (1, 0, 0), (0, 0, 1), (2, 5, 1), (-3, -5, -2) \\\hline
    \end{tabular}
    \caption{Five-point polytopes of width 2 \cite{BS4and5}.}
    \label{tab:2width}
\end{table}

\begin{figure}[t!]
    \centering
    \begin{minipage}{.5\textwidth}
      \centering
      \includegraphics[scale=0.35]{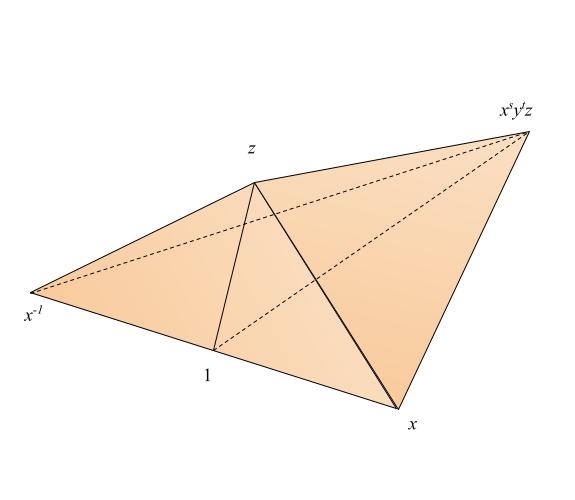}
    \end{minipage}%
    \begin{minipage}{.5\textwidth}
      \centering
      \includegraphics[scale=0.35]{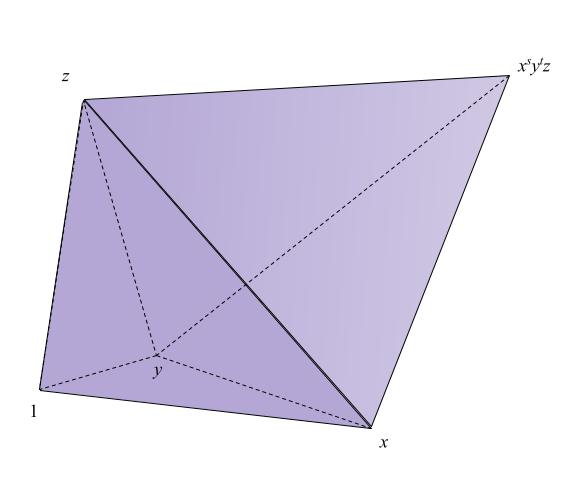}
    \end{minipage}
    \caption{Five point polytopes with signatures $(2,1)$ (left) and $(3,2)$ (right).}
\end{figure}

\subsection{Minimum distances for dimension 5}

We now offer and prove the following statements on the minimum distances of dimension 5 toric codes from only the polytopes of width 1.

\begin{theorem}[Minimum Distances for Five Dimensions] \label{thm:mindist5}
For the five point polytopes of width 1, we obtain the following minimum distance formulae and bounds:
    \begin{align}
        d(C_{P^{(2,1)}(s,t)}) &= (q-1)^3-2(q-1)^2 \label{eq:mindist21}\\
        d(C_{P^{(2,2)}}) &= (q-1)^3 - (2q^2 -5q + 3) \label{eq:mindist22}\\
        d(C_{P^{(3,1)}}) &\ge (q-1)^3 - (q-1)(1+q+2\sqrt{q}) \label{ineq:mindist31}\\
        d(C_{P^{(3,2)}(s,t)}) &\ge (q-1)^3-(q-2)^2 - (s+t)q \label{ineq:mindist32down}\\
        d(C_{P^{(3,2)}(s,t)}) &\le (q-1)^3 - (q-1)(q-3) - q\gcd(s+t, q-1) \label{ineq:mindist32up}
    \end{align}
\end{theorem}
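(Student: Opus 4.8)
The plan is to apply the zero-count formula \eqref{eq:mindist} separately for each of the four width-$1$ signatures of Table~\ref{tab:1width}. The unifying point is that every such polytope is a pyramid over a planar base: each has exactly one lattice point with third coordinate equal to $1$ — contributing the monomial $x^sy^tz$ for signature $(2,1)$ and the monomial $z$ for the other three — while the remaining lattice points lie in the plane $z=0$. Consequently, after fixing the first two coordinates $x,y$ of an evaluation point, every $0\neq f\in\mathcal L(P)$ is affine-linear in $z$: $f(x,y,z)=A(x,y)+z\,B(x,y)$, where $A$ is the sum of the monomials not involving $z$. A one-line case split on whether $A(x,y)$ and $B(x,y)$ vanish then yields, for every such $f$,
\[
Z(f)=(q-1)^2-\#\{B=0\}-\#\{A=0\}+q\,\#\{A=B=0\},
\]
all counts taken over $(x,y)\in(\FF_q^*)^2$. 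So the task reduces to understanding the zero loci, on the torus $(\FF_q^*)^2$, of $A$, of $B$, and of the two together.

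For signatures $(2,2)$ and $(3,1)$ the distinguished monomial is $z$ itself, so $B$ is the constant coefficient $e$ of $z$. If $e\neq0$ then $Z(f)=(q-1)^2-\#\{A=0\}\le(q-1)^2$; if $e=0$ then $Z(f)=(q-1)\,\widehat Z(A)$, where $A$ is supported on the unit square $P^{(3)}_4$, respectively the exceptional triangle $P^{(4)}_4$, and $\widehat Z$ denotes the number of zeros on $(\FF_q^*)^2$. Thus the answer is controlled by the planar bounds already invoked in Section~2: for the unit square $\widehat Z\le 2q-3$ with equality for $(1+x)(1+y)$, so $\max_f Z(f)=(q-1)(2q-3)$ and \eqref{eq:mindist22} follows (since $(q-1)(2q-3)>(q-1)^2$, the $e=0$ branch wins); for the exceptional triangle the Weil-type estimate underlying the bound on $C_{P'^{(4)}_4}$ gives $\widehat Z<1+q+2\sqrt q$, so $\max_f Z(f)<(q-1)(1+q+2\sqrt q)$ and \eqref{ineq:mindist31} follows. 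Signature $(2,1)$ has monomials $1,x,x^{-1},z,x^sy^tz$, so here $A(x,y)=a+bx+ex^{-1}$ depends only on $x$ while $B(x,y)=c+dx^sy^t$. If $d=0$, then $f=A(x)+cz$ is essentially two-dimensional: either $c=0$ and $f=x^{-1}(bx^2+ax+e)$ vanishes on at most two values of $x$ with $y,z$ free, giving at most $2(q-1)^2$ (attained by a split quadratic, which exists in every characteristic), or $c\neq0$ and $Z(f)\le(q-1)^2$. If $d\neq0$, then $\{A=0\}$ is carried by the at most two roots of $bx^2+ax+e$ (the case $A\equiv0$ being immediate), and above each such root the equation $B=0$ has at most $\gcd(t,q-1)$ solutions in $y$; since $P$ must embed in $[0,q-2]^3$ we have $t\le q-2$, hence $\gcd(t,q-1)\le(q-1)/2$, and the identity collapses to $Z(f)\le(q-1)(q-3)+2q\gcd(t,q-1)\le(q-1)(2q-3)<2(q-1)^2$. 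Hence $\max_f Z(f)=2(q-1)^2$, which is \eqref{eq:mindist21}.

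The remaining case, signature $(3,2)$ with monomials $1,x,y,z,x^sy^tz$, is where the real work lies. Now $A(x,y)=a+bx+cy$ is affine and $B(x,y)=d+ex^sy^t$, and the decisive regime is $d,e\neq0$. Because $\gcd(s,t)=1$, the group homomorphism $(x,y)\mapsto x^sy^t$ on $(\FF_q^*)^2$ is surjective with kernel of order $q-1$, so $\{B=0\}$ is a single coset of that kernel, hence of size $q-1$, and is faithfully parametrized by $w\mapsto(x_0w^t,\,y_0w^{-s})$ for $w\in\FF_q^*$, where $(x_0,y_0)$ is any fixed solution of $x^sy^t=-d/e$. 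Substituting this parametrization into $A=0$ and clearing $w^{-s}$ turns $\{A=B=0\}$ into the zero set of the one-variable polynomial $b x_0 w^{s+t}+a w^{s}+c y_0$, which has degree $s+t$ and is identically zero only when $a=b=c=0$. This single fact does both jobs. On one hand it gives $\#\{A=B=0\}\le s+t$ in general, so — combining with $\#\{B=0\}=q-1$ and $\#\{A=0\}\ge q-2$, and checking that the coefficient-degenerate subcases are all smaller — $Z(f)\le(q-2)^2+q(s+t)$ for every $f$, which yields \eqref{ineq:mindist32down}. On the other hand, taking $a=0$ and $b,c\neq0$ makes that polynomial the binomial $b x_0 w^{s+t}+c y_0$; choosing the coefficients so that $-cy_0/(bx_0)$ is an $(s+t)$-th power produces $\#\{A=B=0\}=\gcd(s+t,q-1)$ along with $\#\{A=0\}=q-1$, hence a genuine $f$ with $Z(f)=(q-1)(q-3)+q\gcd(s+t,q-1)$, which yields \eqref{ineq:mindist32up}.

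I expect the signature-$(3,2)$ analysis to be the main obstacle: producing the monomial parametrization of the coset $\{x^sy^t=\beta\}$, verifying that the substituted linear condition has degree exactly $s+t$, and then carrying out the somewhat tedious coefficient-vanishing bookkeeping needed to confirm that $(q-2)^2+q(s+t)$ really is the maximum of $Z(f)$ over all nonzero $f$. A smaller pitfall to watch is the use of $t\le q-2$, hence $\gcd(t,q-1)\le(q-1)/2$, which is exactly what closes the $d\neq0$ branch in the signature-$(2,1)$ estimate. Everything else is deciding which branch of the pyramid split dominates, together with the two planar bounds imported from Section~2.
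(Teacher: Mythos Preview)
Your approach is correct and matches the paper's: the pyramid decomposition $f=A(x,y)+zB(x,y)$, the reduction to the planar bounds of Section~2 for signatures $(2,2)$ and $(3,1)$, the same case split for $(2,1)$ with the key inequality $\gcd(t,q-1)\le(q-1)/2$, and a degree-$(s{+}t)$ bound on $\#\{A=B=0\}$ for signature $(3,2)$. One small slip in your exposition: signatures $(2,1)$ and $(3,2)$ each have \emph{two} lattice points at height~$1$, not one --- but your actual decompositions, with $B(x,y)=c+dx^sy^t$ and $B(x,y)=d+ex^sy^t$ respectively, already account for both, so nothing downstream is affected. For $(3,2)$ your coset parametrization $w\mapsto(x_0w^t,\,y_0w^{-s})$ of $\{x^sy^t=-d/e\}$ is a slightly cleaner route to the degree bound than the paper's direct elimination yielding $(A'+x)^tx^s=B'$, but the two are equivalent and produce the same bounds.
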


\begin{proof}
The cases (\ref{eq:mindist22}) and (\ref{ineq:mindist31}) are the simplest cases. Recall that the minimum distance of the toric code from $P'^{(3)}_4$, the unit square in $\RR^3$, is $(q-1)^3-(2q-3)(q-1)$ and that from the exceptional triangle $P'^{(4)}_4$ is bounded by $d \geq (q-1)^3-(1+q+2\sqrt{q})(q-1)$. Since the five point polytopes with signatures $(2,2)$ and $(3,1)$ are exactly the unit pyramids over the unit square and the exceptional triangle, respectively, we have that their minimum distances are increased by a factor of $(q-1)$ \cite[Theorem 2.3]{SS2}.

Consider now (\ref{eq:mindist21}). Note that $(x+1)(x^{-1}-1)$ has $2(q-1)^2$ zeros. We claim that the polynomial 
\begin{equation*}
    f = A + Bx + Cx^{-1} + Dz + Ex^sy^tz
\end{equation*}
has no more than $2(q-1)^2$ zeros. If $E=0$, then any polynomial $f=A+Bx+Cx^{-1}+Dz $ will have no more than $2(q-1)^2$ zeros, since we have the unit pyramid over the line in the plane with three lattice points, which gives a code of minimum distance $(q-1)^3 - 2(q-1)^2$.   So we may assume that  $E \neq 0$.  
Call $P=A+Bx+Cx^{-1}$ and $Q=D+Ex^sy^t$.

We reduce this to two cases.

\textsc{Case 1.} Suppose $P$ has no solutions. Then $f=0$ only if $Q\ne 0$ so that we may select $x$ and $y$ freely and let $z=-PQ^{-1}$. Using the notation $N(x)$ similarly to its use in the proof of Theorem \ref{thm:class1}, giving the number of values of $y$ for which $Q_x=0$ for a fixed $x$, we see that there are 
\begin{align*}
    Z(f) &= \sum_{x\in\FF_q^*} [(q-1)-N(x)] \\
        &= (q-1)^2 - \sum_{x\in\FF_q^*} N(x) \\
        &= (q-1)^2-(q-1)\\
        &< 2(q-1)^2
\end{align*}
zeros of $f$, proving this case of the claim.

\textsc{Case 2.} Suppose $P$ has two solutions. Call them $x_1$ and $x_2$. Then $f=0$ if $x=x_1$, $y$ is one of $N(x_1)$ choices so that $Q=0$, and $z$ is anything at all or likewise if $x=x_2$ and $y$ is one of $N(x_2)$ choices for the same reason; or if $Q\ne 0$ as in Case 1. Therefore we have
\begin{align*}
    Z(f) &= (q-1)(N(x_1)+N(x_2)) + \sum_{x\ne x_1,x_2} [(q-1)-N(x)]\\
        &= q(N(x_1)+N(x_2)) + (q-1)(q-4)
\end{align*}
zeros. Recall that $N(x)=0$ or $d=\gcd(t,q-1)$. Note also that since $d<q-1$ and $d\mid (q-1)$, we know that $d\le\frac{q-1}{2}$ and so
\begin{align*}
    Z(f) &= q(N(x_1)+N(x_2)) + (q-1)(q-4)\\
        &\leq 2qd + (q-1)(q-4)\\
        &\leq q(q-1) + (q-1)(q-4)\\
        &=2(q-1)(q-2)\\
        &< 2(q-1)^2.
\end{align*}
This proves our claim for this case as well.

We turn to (\ref{ineq:mindist32down}).  We consider a polynomial of the form 
\begin{equation*}
    f = A + Bx + Cy + Dz + Ex^sy^tz
\end{equation*}
and let $P=A + Bx + Cy$ and $Q=D+Ex^sy^t$ be the same as above. Let $N$ denote the number of pairs $(x,y)$ for which $P$ and $Q$ are both $0$.

\textsc{Case 1.} Suppose $A\ne 0$. There are $(q-2)-N$ pairs $(x,y)$ which give $P=0\ne Q$ (since we cannot have $x=AB^{-1}$). There are $(q-1)-N$ pairs which give $P\ne 0=Q$. Therefore the number of zeros of $f$ are the sum of those giving $P\ne 0 \ne Q$, which are
\begin{equation*}
    (q-1)^2 - [(q-1)-N] - [(q-2)-N] - N = (q-2)^2+N,
\end{equation*}
and those giving $P=0=Q$, which are $N(q-1)$. Thus there are $(q-2)^2 + Nq$ zeros.

\textsc{Case 2.} Suppose $A=0$. This time, $(q-1)-N$ pairs $(x,y)$ give either $P=0\ne Q$ or $P\ne 0=Q$, and so there are $(q-1)(q-3)+Nq<(q-2)^2+Nq$ zeros.

Note that $N$ can be found as the number of solutions $x$ to the equation $(A'+x)^tx^s=B'$ (where $A'=AB^{-1}$ and $B'=(-1)^{t+1}(CB^{-1})^tDE^{-1}$) which has degree $s+t$. It follows that $d\leq (q-1)^3 - (q-2)^2 - (s+t)q$. 

Lastly, we turn to (\ref{ineq:mindist32up}). Using the same counts of zeros in cases 1 and 2 from (\ref{ineq:mindist32down}), we have
\begin{equation*}
    \underset{0\ne f\in\mathcal{L}(P)}{\max} Z(f) \geq (q-1)(q-3)+Nq.
\end{equation*}
Since $N\le\gcd(s+t,q-1)$ is an attainable bound, we obtain our upper bound on the minimum distance. Thus concludes the proof of all formulae and bounds on the minimum distance.
\end{proof}

Proceeding as we have before, we use Theorem \ref{thm:mindist5} to complete a classification of codes furnished by the polytopes with which the theorem is concerned. We begin by providing propositions akin to our main classification theorems to treat codes furnished by polytopes of signature $(2,1)$. Then we classify codes furnished by polytopes of signature $(3,2)$. Finally, we show that codes furnished by polytopes of width 1 and different signatures are not monomially equivalent.

\subsection{Classification for dimension 5}

\begin{prop} \label{prop:521}
Let $C_1$ and $C_2$ be toric codes on five point polytopes $P^{(2,1)}(s,t)$ and $P^{(2,1)}(s,t')$, respectively, over the field $\FF_q$. Then $C_1$ and $C_2$ are monomially equivalent if and only if $\gcd(t,q-1)=\gcd(t',q-1)$.
\end{prop}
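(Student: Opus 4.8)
The plan is to follow the proof of Theorem~\ref{thm:class1} almost verbatim for the sufficiency direction, and to supply a different argument for necessity: here the minimum distance of $C_{P^{(2,1)}(s,t)}$ is the constant $(q-1)^3-2(q-1)^2$ for every admissible $t$ by equation~(\ref{eq:mindist21}) of Theorem~\ref{thm:mindist5}, so it cannot serve as the distinguishing invariant the way it does in Theorem~\ref{thm:class1}.

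For the ``if'' direction I would set $l=\gcd(t,q-1)=\gcd(t',q-1)$ and argue exactly as in Section~4. The monomials attached to $P^{(2,1)}(s,t)$ are $1,\,x,\,z,\,x^{-1},\,x^sy^tz$, so the generator-matrix column for the point $(x,y,z)$ is $(1,\,x,\,z,\,x^{-1},\,x^sy^tz)^{\top}$; the $x^{-1}$-entry is determined by the $x$-entry, so it contributes nothing beyond what $x$ already records, and the relevant partition is again $\mathscr{P}_t$, with cells indexed by $(x_0,z_0,A)$ on which $(x,z)=(x_0,z_0)$ and $y^t=A$ (and likewise $\mathscr{P}_{t'}$). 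On any nonempty cell of $\mathscr{P}_t$ every column of $G_1$ equals the single vector $(1,\,x_0,\,z_0,\,x_0^{-1},\,x_0^sz_0A)^{\top}$, and the cell is nonempty precisely when $A$ lies in the common image of $y\mapsto y^t$ and $y\mapsto y^{t'}$, namely the subgroup of $l$-th powers, in which case it has exactly $l$ elements; the same description holds for $\mathscr{P}_{t'}$. Hence the multiset of columns of $G_1$ coincides with that of $G_2$, so any permutation $\Pi$ carrying the cells of $\mathscr{P}_{t'}$ onto the cells of $\mathscr{P}_t$ gives $G_2=G_1\Pi$; taking $\Delta=I$ exhibits the monomial equivalence as in Definition~\ref{def:moneq}.

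For the ``only if'' direction I would use the proportionality structure of the columns as the invariant. If $C_1$ and $C_2$ are monomially equivalent then, after passing to row-equivalent generator matrices as in the proof of Theorem~\ref{thm:class1}, we may write $G_2=MG_1\Delta\Pi$ with $M$ invertible, $\Delta$ invertible diagonal, and $\Pi$ a permutation; since $\Delta$ only rescales columns and $M$ is invertible, the relation ``two columns are scalar multiples of one another'' is preserved, so the multiset of cardinalities of the proportionality classes of the columns is a monomial-equivalence invariant. For a generator matrix of $C_{P^{(2,1)}(s,t)}$ the first row is identically $1$, so two columns are proportional exactly when they are equal, which by the computation above happens exactly when the two points share $(x,z)$ and the same value of $y^t$; therefore every proportionality class has size $\gcd(t,q-1)$, and there are $(q-1)^3/\gcd(t,q-1)$ of them. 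This multiset determines $\gcd(t,q-1)$, so if $\gcd(t,q-1)\ne\gcd(t',q-1)$ then $C_1$ and $C_2$ are not monomially equivalent.

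The main obstacle is precisely the necessity direction: the usual device of separating codes by their minimum distances is unavailable, so one must locate a genuinely different invariant and verify that it is robust under the full monomial-equivalence relation, including the implicit freedom to replace $G_1$ by any row-equivalent generator matrix. Once the column-proportionality multiset is seen to be such an invariant, the rest is a direct transcription of the bookkeeping in Sections~3 and~4.
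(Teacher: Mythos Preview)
Your sufficiency argument is exactly the paper's: both simply note that the extra $x^{-1}$-row is determined by the $x$-row, so the partition $\mathscr{P}_t$ and the cell-matching permutation from the proof of Theorem~\ref{thm:class1} go through unchanged.

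For necessity the two arguments share the same underlying idea---the column-repetition structure of the generator matrix---but you make it considerably more robust. The paper simply observes that when $\gcd(t,q-1)\ne\gcd(t',q-1)$ the cells $I_1(x_0,z_0,A)$ and $I_2(x_0,z_0,A)$ have different sizes, so ``we cannot create the desired permutation $\Pi$.'' That phrasing tacitly assumes any monomial equivalence would have to send each cell of $\mathscr{P}_{t'}$ to the cell of $\mathscr{P}_t$ with the \emph{same} label $(x_0,z_0,A)$, and with $\Delta=I$ and no change of basis on the rows; none of this is justified. Your argument replaces this with a genuine invariant: the multiset of sizes of column-proportionality classes survives the full relation $G_2=MG_1\Delta\Pi$ (the invertible $M$ preserves linear dependence among columns, $\Delta$ preserves proportionality, and $\Pi$ just relabels), and for the standard generator of $C_{P^{(2,1)}(s,t)}$ every such class has size exactly $\gcd(t,q-1)$. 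This both closes the gap left open in the paper's sketch and makes transparent why the minimum distance cannot do the job here.
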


\begin{proof}
($\Leftarrow$) As was the case with Theorem \ref{thm:class1}, we have monomial equivalence when we find a permutation matrix $\Pi$ which matches the columns corresponding to $I_1(x_0,z_0,A)$ with those of $I_2(x_0,z_0,A)$ for all $(x_0,z_0,A)\in(\FF_q^*)^3$. Indeed, since we are only including in the generator matrices the additional row determined by $\epsilon(x^{-1})$, we may repeat that previous argument exactly to conclude the sufficiency of this condition.

($\Rightarrow$) We don't have the luxury of a discerning minimum distance formula in this case, so instead suppose $A$ is in the image of both of the automorphisms $a\mapsto a^t$ and $a\mapsto a^{t'}$. We cannot create the desired permutation $\Pi$ matching the $\gcd(t,q-1)$ columns $I_1(x_0,z_0,A)$ to the $\gcd(t',q-1)$ columns $I_2(x_0,z_0,A)$, and so $C_1$ and $C_2$ are not monomially equivalent.
\end{proof}

\begin{prop}
Let $C_1$ and $C_2$ be toric codes on five point polytopes $P^{(2,1)}(s,t)$ and $P^{(2,1)}(s',t)$, respectively, over the field $\FF_q$. Then $C_1$ and $C_2$ are monomially equivalent if and only if $s\equiv s' \bmod\gcd(t,q-1)$.
\end{prop}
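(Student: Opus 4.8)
The plan is to follow the proof of Theorem~\ref{thm:class2}, the companion statement for four-point empty tetrahedra $T(s_1,t)$ and $T(s_2,t)$, in the same way that Proposition~\ref{prop:521} followed the proof of Theorem~\ref{thm:class1}. Throughout I would write $l=\gcd(t,q-1)$, fix a primitive element $\alpha$ of $\FF_q$, and take $s$ and $s'$ to be the standard representatives with $0\le s,s'\le t/2$, so that $P^{(2,1)}(s,t)$ and $P^{(2,1)}(s',t)$ are lattice equivalent exactly when $s=s'$; unlike Theorem~\ref{thm:class2}, then, there is no separate geometric case to carry along. A column of the generator matrix $G_i$ indexed by $(x,y,z)\in(\FF_q^*)^3$ has the five entries $1$, $x$, $x^{-1}$, $z$, $x^{s_i}y^tz$ (with $s_1=s$ and $s_2=s'$); since the first row of each $G_i$ is all $1$'s, any relation $G_2=G_1\Delta\Pi$ forces $\Delta=I$, exactly as in Theorem~\ref{thm:class1}. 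Hence $C_1$ and $C_2$ are monomially equivalent if and only if the columns of $G_2$ are a permutation of the columns of $G_1$, and since the $1,x,x^{-1},z$ entries already agree, a column of $G_1$ at $(x_0,y,z_0)$ matches a column of $G_2$ at $(x_0,y',z_0)$ exactly when $y^t=x_0^{s'-s}(y')^t$.

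For the ``if'' direction I would reuse the first half of the proof of Theorem~\ref{thm:class2} essentially verbatim. Assuming $s\equiv s'\bmod l$, write $s'-s=kl$ and $t=jl$ with $\gcd(j,(q-1)/l)=1$; given $x_0=\alpha^n$, $z_0$, and $y'=\alpha^{i'}$, solving the congruence $ij\equiv i'j+nk\pmod{(q-1)/l}$ produces an exponent $i$ with $(\alpha^i)^t=x_0^{s'-s}(y')^t$, so $y=\alpha^i$ realizes the required match. Organized through the partition $\mathscr{P}_t$, the key point is that $x_0^{s'-s}=(\alpha^{nk})^l$ lies in the subgroup of $l$-th powers of $\FF_q^*$, which is exactly the image of $\phi\colon y\mapsto y^t$; hence multiplication by $x_0^{s'-s}$ carries the (possibly empty) $l$-element solution set of $y^t=A$ bijectively onto that of $y^t=x_0^{s'-s}A$. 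Therefore the nonempty cells of $\mathscr{P}_t$ for $G_1$ biject with the nonempty cells for $G_2$ in a column-matching way, and we may take $\Pi$ to be any permutation realizing this bijection.

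For the ``only if'' direction I would argue by contraposition, as in the second half of Theorem~\ref{thm:class2}. Suppose $s\not\equiv s'\bmod l$, and look at the column of $G_1$ at $(\alpha,1,z_0)$, whose last entry is $\alpha^{s}z_0$; its second and fourth entries are $\alpha$ and $z_0$, so any matching column of $G_2$ must occur at some $(\alpha,y',z_0)$, and then its last entry $\alpha^{s'}(y')^tz_0$ must equal $\alpha^{s}z_0$, forcing $\alpha^{s-s'}\in Im(\phi)$. But $Im(\phi)$ is precisely the index-$l$ subgroup of $l$-th powers of $\FF_q^*$, and $s-s'\not\equiv 0\bmod l$ places $\alpha^{s-s'}$ in a nontrivial coset of it, a contradiction. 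Hence no such permutation exists and $C_1$, $C_2$ are monomially inequivalent.

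Neither half is deep --- both amount to the argument of Theorem~\ref{thm:class2} with its second, lattice-equivalence, condition suppressed --- so I expect the real work to be bookkeeping. The things to verify carefully are: that the column-matching condition is genuinely \emph{equivalent} to ``$x_0^{s-s'}\in Im(\phi)$ for every $x_0\in\FF_q^*$'', and that this in turn is equivalent to $l\mid(s-s')$ (using $l\mid q-1$); that the extra coordinate $x^{-1}$ contributes no new constraint because it is determined by $x$; and --- the likeliest place to slip --- that the empty cells of $\mathscr{P}_t$ (those with $A\notin Im(\phi)$) are handled consistently on both sides when assembling $\Pi$.
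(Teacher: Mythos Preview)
Your proposal is correct and follows essentially the same approach as the paper, which simply states that the argument is identical to that for the first condition in Theorem~\ref{thm:class2} once one notes there are no nontrivial lattice equivalences among signature $(2,1)$ polytopes. Your write-up merely fills in those details explicitly; the one small deviation is that you justify $\Delta=I$ via the all-$1$'s first row (as in the proof of Theorem~\ref{thm:class1}) rather than via the non-lattice-equivalence observation the paper invokes in Theorem~\ref{thm:class2}, but under the paper's Definition~\ref{def:moneq} either justification suffices.
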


\begin{proof}
Unlike with Theorem \ref{thm:class2}, there are no nontrivial lattice equivalences between signature $(2,1)$ polytopes. Otherwise this proposition is proved identically to that of the first condition in the statement of that theorem.
\end{proof}

\begin{prop}
Let $C_1$ and $C_2$ be toric codes on five point polytopes $P^{(3,2)}(s,t)$ and $P^{(3,2)}(s',t')$, respectively, over the field $\FF_q$. Then $C_1$ and $C_2$ are monomially equivalent if and only if $(s,t)=(s',t')$.
\end{prop}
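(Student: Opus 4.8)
The plan is to dispose of the forward implication instantly --- if $(s,t)=(s',t')$ then $P^{(3,2)}(s,t)=P^{(3,2)}(s',t')$, so $C_1=C_2$ --- and to obtain the converse by rerunning the column-matching argument used in the proofs of Theorems~\ref{thm:class1} and~\ref{thm:class2}, with a genuine simplification. The monomials attached to $P^{(3,2)}(s,t)$ are $1, x, y, z, x^sy^tz$, so a generic element of $\mathcal L(P^{(3,2)}(s,t))$ is $f = A + Bx + Cy + Dz + Ex^sy^tz$ and the column of the generator matrix indexed by $(x,y,z)\in(\FF_q^*)^3$ is $(1, x, y, z, x^sy^tz)^{\mathsf{T}}$. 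As in Section~4, I would pick generator matrices $G_1$ of $C_1$ and $G_2$ of $C_2$ with the constant monomial listed first; matching the all-ones first rows forces $\Delta = I$ in Definition~\ref{def:moneq}, so monomial equivalence of $C_1$ and $C_2$ becomes the statement that the columns of $G_2$ are a reordering of those of $G_1$.

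The point to stress is that the sub-monomials $1, x, y, z$ already separate the points of $(\FF_q^*)^3$. Hence all $(q-1)^3$ columns of each $G_i$ are pairwise distinct --- no analogue of the partition $\mathscr{P}_t$ is needed --- and the column $(1, a, b, c, a^sb^tc)^{\mathsf{T}}$ of $G_1$ can be matched only with the column of $G_2$ sharing its first four entries, namely $(1, a, b, c, a^{s'}b^{t'}c)^{\mathsf{T}}$. So monomial equivalence forces $a^sb^tc = a^{s'}b^{t'}c$ for all $(a,b,c)\in(\FF_q^*)^3$, i.e.\ $a^{s-s'}b^{t-t'} = 1$ for all $(a,b)\in(\FF_q^*)^2$. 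Putting $b=1$ with $a$ a primitive element gives $(q-1)\mid s-s'$, and putting $a=1$ gives $(q-1)\mid t-t'$. Since the representatives lie in $[0,q-2]^3$ we have $0\le s, s', t, t'\le q-2$, so $|s-s'|$ and $|t-t'|$ are each strictly less than $q-1$ and hence vanish, giving $(s,t)=(s',t')$.

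I do not expect a real obstacle here. The only step deserving a careful sentence is the reduction to a pure column permutation (the claim $\Delta = I$), which I would justify just as in the proofs of Theorems~\ref{thm:class1} and~\ref{thm:class2} rather than re-derive; past that, the rigidity of the column matching makes the argument short, the sole hypothesis actually used being that the polytopes fit inside $[0,q-2]^3$.
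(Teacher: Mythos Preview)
Your proposal is correct and follows essentially the same route as the paper's own proof: reduce monomial equivalence to a pure column permutation via the all-ones first row, observe that the four rows coming from $1,x,y,z$ already pin down each column uniquely, and read off $x^sy^tz = x^{s'}y^{t'}z$ for all $(x,y,z)$ to conclude $(s,t)=(s',t')$. Your version is in fact slightly more careful than the paper's in making explicit why $x^s = x^{s'}$ for all $x\in\FF_q^*$ forces $s=s'$ rather than merely $s\equiv s'\bmod(q-1)$, namely via the constraint $P\subseteq[0,q-2]^3$.
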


\begin{proof}
Suppose $C_1$ and $C_2$ are monomially equivalent so that their generators $G_1$ and $G_2$ are chosen such that $G_2=G_1\Pi$. Thus $G_2e_m=G_1\Pi e_m$ for each $m=1,\ldots,(q-1)^3$. In particular, we know that 
\begin{equation*}
    G_2e_m = \begin{bmatrix}
        1\\
        x\\
        y\\
        z\\
        x^sy^tz
    \end{bmatrix} = G_1\Pi e_m
\end{equation*}
for some $(x,y,z)\in\FF_q^*$. It follows that $x^sy^tz=x^{s'}y^{t'}z$; likewise for all $(x,y,z)$. Of course, by choosing $(x,1,1)$ we see that $x^s=x^{s'}$ for all $x$. Similarly, by choosing $(1,y,1)$ we see that $y^t=y^{t'}$ for all $y$. It follows that $s=s'$ and $t=t'$.
\end{proof}

\begin{prop}
Let $P_1$ and $P_2$ be five point polytopes of width 1. Then $C_{P_1}$ and $C_{P_2}$ are monomially equivalent only if $P_1$ and $P_2$ have the same signature.
\end{prop}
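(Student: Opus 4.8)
The plan is to distinguish the codes by their minimum distances, exactly as the minimum-distance invariant is used throughout Section 2. Theorem \ref{thm:mindist5} gives us, for each of the four width-1 signatures, a formula or a pair of bounds for the minimum distance of the associated code; if for every pair of distinct signatures these intervals of possible minimum distances are disjoint (for all sufficiently large $q$, and then checked separately for the small exceptional $q$), then no code of one signature can be monomially equivalent to a code of another, since monomially equivalent codes share the same minimum distance. So the first step is to tabulate the five quantities from Theorem \ref{thm:mindist5}: $(q-1)^3-2(q-1)^2$ for signature $(2,1)$; $(q-1)^3-(2q^2-5q+3) = (q-1)^3 - (2q-3)(q-1)$ for signature $(2,2)$; the lower bound $(q-1)^3-(q-1)(1+q+2\sqrt q)$ for signature $(3,1)$; and for signature $(3,2)$ the window between $(q-1)^3-(q-2)^2-(s+t)q$ and $(q-1)^3-(q-1)(q-3)-q\gcd(s+t,q-1)$.

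The second step is the pairwise comparison. Writing each quantity as $(q-1)^3$ minus a ``defect,'' I would show the defects fall into separated ranges. The defect for $(2,1)$ is $2(q-1)^2 \approx 2q^2$; for $(2,2)$ it is $(2q-3)(q-1) \approx 2q^2 - 5q$, which is strictly smaller than the $(2,1)$ defect for all $q \ge 2$ (indeed $2(q-1)^2 - (2q-3)(q-1) = (q-1) > 0$), so $(2,1)$ and $(2,2)$ are immediately separated. The defect for $(3,1)$ is at most $(q-1)(1+q+2\sqrt q) \approx q^2$, which is roughly half the $(2,1)$ and $(2,2)$ defects, so $(3,1)$ is separated from both of those as soon as $q$ is large enough that $q^2 + 3q + 2q^{3/2} < (2q-3)(q-1)$, i.e. for all $q \ge$ a small explicit bound. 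For signature $(3,2)$, the defect lies between $(q-2)^2 + (s+t)q$ and $(q-1)(q-3)+q\gcd(s+t,q-1) \le (q-1)(q-3) + q(q-1)$; here $s+t \ge 1$ since $0 < s \le t$ and $\gcd(s,t)=1$, so the lower bound on the defect is at least $(q-2)^2 + q > q^2 - 3q + 4$, which keeps $(3,2)$ away from $(3,1)$, while the upper bound $(q-1)(q-3)+q(q-1) = 2q^2 - 5q + 3 = (2q-3)(q-1)$ equals the $(2,2)$ defect — so the genuinely delicate comparisons are $(3,2)$ versus $(2,2)$ and $(3,2)$ versus $(2,1)$ near the boundary.

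The main obstacle, then, is precisely that for signature $(3,2)$ we only have bounds, not a formula, and its defect-window can in principle touch the defect $(2q-3)(q-1)$ of signature $(2,2)$ and overlap the region below the defect $2(q-1)^2$ of signature $(2,1)$. To close this I would argue that the upper bound $(2q-3)(q-1)$ on the $(3,2)$ defect is attained only in the extreme case, and more usefully that the actual minimum distance of $C_{P^{(3,2)}(s,t)}$ never coincides with $(q-1)^3-2(q-1)^2$: since the $(3,2)$ defect is at least $(q-2)^2 + q = q^2 - 3q + 4$ and at most $2q^2 - 5q + 3$, equality with the $(2,1)$ defect $2q^2 - 4q + 2$ would force $q\gcd(s+t,q-1)$ large and $s+t$ small simultaneously, which is contradictory because $\gcd(s+t,q-1) \le s+t$ forces the upper bound on the defect down to $(q-1)(q-3) + q(s+t)$, strictly less than $2(q-1)^2$ whenever $s+t < q-1$; the remaining case $s+t \ge q-1$ is handled by the lower bound $(q-2)^2 + (s+t)q \ge (q-2)^2 + (q-1)q > 2(q-1)^2$, so the $(3,2)$ defect then exceeds the $(2,1)$ defect and no collision occurs. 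A symmetric squeeze handles $(3,2)$ versus $(2,2)$. Finally I would run the finitely many small values of $q$ (say $q \le$ whatever bound the asymptotic estimates required, in particular the $q$ for which $\sqrt q$ terms matter) through the explicit formulas and tables of Section 5 to confirm no accidental equality, completing the proof that width-1 codes of different signatures are never monomially equivalent.
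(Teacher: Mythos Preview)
Your minimum-distance strategy matches the paper's approach for the comparisons \emph{not} involving signature $(3,2)$: the paper separates $(2,1)$ from $(2,2)$ by the exact formulas, and separates $(3,1)$ from $(2,1)$ and $(2,2)$ by the Hasse--Weil bound plus a small-$q$ check, just as you outline. The genuine gap is in the three comparisons involving $(3,2)$.

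First, your bookkeeping of the $(3,2)$ defect window is inverted. Since
\[
(q-1)^3-(q-2)^2-(s+t)q \;\le\; d(C_{P^{(3,2)}(s,t)}) \;\le\; (q-1)^3-(q-1)(q-3)-q\gcd(s+t,q-1),
\]
the defect lies in $\bigl[(q-1)(q-3)+q\gcd(s+t,q-1),\ (q-2)^2+(s+t)q\bigr]$, not the reverse. Your squeeze argument for $(3,2)$ versus $(2,1)$ relies on calling $(q-2)^2+(s+t)q$ the \emph{lower} bound when $s+t\ge q-1$, which it is not; so the case $s+t\ge q-1$ is not handled. Concretely, take $s+t=q$ (possible with $\gcd(s,t)=1$ and $t\le q-2$ for many $q$): then $\gcd(s+t,q-1)=1$, and the defect interval becomes $[q^2-3q+3,\ 2q^2-4q+4]$, which contains the $(2,1)$ defect $2(q-1)^2=2q^2-4q+2$. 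Likewise, for $s+t=q-1$ the lower end of the $(3,2)$ interval is exactly the $(2,2)$ defect $(2q-3)(q-1)$. And for $(3,2)$ versus $(3,1)$ you are comparing a lower bound on one defect against an upper bound on the other, which also overlaps. So the bounds of Theorem~\ref{thm:mindist5} alone are simply not sharp enough to finish these three cases, and no amount of squeezing fixes that.

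The paper acknowledges exactly this (see the paragraph following the proof) and closes the $(3,2)$ comparisons by a direct generator-matrix argument instead: $P^{(3,2)}(s,t)$ shares four lattice points with each of $P^{(2,2)}$, $P^{(3,1)}$, and $P^{(2,1)}(s,t)$, so monomial equivalence with $\Delta=I$ would force the fifth monomials to agree as functions on $(\FF_q^\ast)^3$; but $x^sy^tz$ is not identically $xy$, nor $x^{-1}y^{-1}$, nor $x^{-1}$, giving the required inequivalence. You should replace your $(3,2)$ analysis with that argument.
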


Note well that having the same signature is not sufficient for monomial equivalence.

\begin{proof}
This proposition amounts to stating that we obtain distinct codes when choosing polytopes of different signatures at the outset. We conclude this by first considering the minimum distance information that we have about all of these cases. Firstly, there is no risk of equivalence between polytopes of signatures $(2,1)$ and $(2,2)$ since for a fixed $q$ the furnished codes have different minimum distances given by the formulas.

If $q\ge 8$, then there can be no equivalence between $(2,1)$ and $(3,1)$ since $2(q-1)^2 > (q-1)(1+q+2\sqrt{q})$ in that case. To fully justify that codes from these polytopes will never be equivalent, a manual verification that minimum distances (or another invariant) differ for $q<8$ will suffice. Comparing $(2,1)$ and $(3,2)$, we require matching shape geometric parameters for both polytopes so that $x^sy^tz=x^{s'}y^{t'}z$ in general. However, since $x^{-1}\neq y\in\FF_q^*$ in general, we can have no equivalent codes. Comparing $(2,2)$ and $(3,1)$, if $q\ge 11$, then $(q-1)(2q-3) > (q-1)(q+q+2\sqrt{q})$, so we can have no equivalent codes. If $q<11$, a manual check shows that the codes are distinct. Comparing $(2,2)$ and $(3,2)$, since $x^sy^tz\neq xy$ in general, we can have no equivalent codes. Finally, comparing $(3,1)$ and $(3,2)$, since $x^sy^tz\neq x^{-1}y^{-1}$ in general, we can have no equivalent codes.
\end{proof}

In the cases where signature $(3,2)$ occurs in comparison with codes of another signature, note that our minimum distance intervals are not tight enough to conclude monomial inequivalence. However, it is easy to check that the generator matrices are distinct because $P^{(3,2)}(s,t)$, $P^{(3,1)}$, and $P^{(2,2)}$ all share four lattice points and $P^{(3,2)}(s,t)$ and $P^{(2,1)}(s,t)$ share four lattice points.

The patterns one will observe in the minimum distances of codes furnished by five point polytopes of width 2 are somewhat less tractable. It would be interesting to uncover what common and salient features there are of those polynomials' vanishing points in general. We end our classification of 3-fold toric codes at the injunction demarcated by this nontrivial question.

\section*{Acknowledgements}

This research was completed at the \textbf{REU Site: Mathematical Analysis and Applications at the University of Michigan-Dearborn}. We would like to thank the National Science Foundation (DMS-1950102); the National Security Agency (H98230-19); the College of Arts, Sciences, and Letters; and the Department of Mathematics and Statistics
for their support.

\bibliography{bibliography}

\end{document}